\def\@printyear{TT}
\def\MR#1#2{\href{http://www.ams.org/mathscinet-getitem?mr=#1}{MR#2}}
\newcommand{\Z}{\mathbb{Z}}
\newcommand{\R}{{\mathbb R}}
\newcommand{\F}{{\mathbb F}}
\newcommand{\Ha}{{\mathbb H}}
\newcommand{\Hu}{{\mathcal H}}
\newcommand{\Proj}{{\mathbb P}}
\DeclareMathOperator{\tr}{tr}
\DeclareMathOperator{\N}{N}
\DeclareMathOperator{\ord}{ord}
\newcommand{\legendre}[2]{\genfrac(){}{}{#1}{#2}}
\theoremstyle{plain}
\newtheorem{theorem}{Theorem}
\newtheorem{corollary}[theorem]{Corollary}
\newtheorem{lemma}[theorem]{Lemma}
\newtheorem{proposition}[theorem]{Proposition}
\numberwithin{theorem}{section}
\theoremstyle{definition}
\numberwithin{equation}{section}
\title{Metacommutation of Hurwitz primes}
\author{Henry Cohn}
\address{Microsoft Research New England\\
One Memorial Drive\\
Cambridge, Massachusetts 02142}
\email{cohn@microsoft.com}
\author{Abhinav Kumar}
\address{Department of Mathematics\\
Massachusetts Institute of Technology\\
Cambridge, Massachusetts 02139}
\email{abhinav@math.mit.edu}
\thanks{The second author was supported in part by National Science Foundation grants
DMS-0757765 and DMS-0952486 and by a grant from the Solomon
Buchsbaum Research Fund.}
\subjclass[2010]{Primary 11R52, 11R27}
\date{December 31, 2012 and, in revised form, August 30, 2013.}
\begin{document}

\begin{abstract}
Conway and Smith introduced the operation of {\em
metacommutation} for pairs of primes in the ring of Hurwitz
integers in the quaternions. We study the permutation induced
on the primes of norm $p$ by a prime of norm $q$ under
metacommutation, where $p$ and $q$ are distinct rational
primes. In particular, we show that the sign of this
permutation is the quadratic character of $q$ modulo $p$.
\end{abstract}

\maketitle

\section{Introduction}

In this paper we study the metacommutation mapping, which we
will define shortly.  Let $\Ha = \R + \R i + \R j + \R k$ be
the algebra of Hamilton quaternions over $\R$, and let $\Hu$ be
the subring of Hurwitz integers, consisting of the quaternions
$a + bi + cj + dk$ for which $a,b,c,d$ are all in $\Z$ or all
in $\Z + 1/2$. Recall that for an element $x = a + bi + cj +
dk$, its \emph{conjugate} $x^\sigma$ is $a - bi - cj - dk$ (we
reserve the notation $\bar x$ for reduction modulo a prime).
The reduced\footnote{Recall that the norm and trace are
``reduced'' because the norm and trace of the linear
transformation of multiplication by $x$ on $\Ha$ are $\N(x)^2$
and $2 \tr(x)$, respectively, for either left or right
multiplication.} \emph{norm} of $x$ is $\N(x) = xx^\sigma = a^2
+ b^2 + c^2 + d^2$, and the reduced \emph{trace} of $x$ is
$\tr(x) = x + x^\sigma = 2a$.

It is well known that $\Hu$ has a Euclidean division algorithm
(on the left or right), and thus every one-sided ideal is
principal. An element $P \in \Hu$ is \emph{prime} if it is
irreducible; i.e., it is not a product of two nonunits in
$\Hu$. We say a left prime ideal $\mathfrak{P} = \Hu P$
\emph{lies over} the rational prime $p$ if $p \in
\mathfrak{P}$, which is equivalent to $\N(P) = p$. If $p$ is
odd, there are exactly $p+1$ Hurwitz primes lying over $p$, up
to left multiplication by units (in other words, there are
exactly $p+1$ prime left ideals lying over $p$), while there is
only one Hurwitz prime over $p = 2$.

Because of the Euclidean algorithm for $\Hu$, we may factor any
nonzero Hurwitz integer $x$ as
\[
x = P_1 P_2 \dots P_n,
\]
where $P_i$ is a Hurwitz prime of norm $p_i$. We say that this is a
factorization of~$x$ \emph{modeled on} a factorization of $\N(x)$ as
$p_1 \dots p_n$ (where we pay attention to the order of the rational
primes $p_1,\dots,p_n$). When $\N(x)$ is square-free, such a
factorization is unique up to \emph{unit migration}. That is, if
$p_1,\dots,p_n$ are distinct, then every
\newpage 
\noindent 
factorization of $x$ modeled
on $\N(x) = p_1 \dots p_n$ is of the form
\[
x = (P_1 u_1) (u_1^{-1} P_2 u_2) \dots (u_{n-1}^{-1} P_n),
\]
where $u_1, \dots, u_{n-1}$ are units of $\Hu$.

However, if we want to look at all possible prime factorizations of $x$, then
we must allow for changes in the order of the primes $p_1,\dots,p_n$.  If $P$
and $Q$ are primes lying over distinct rational primes $p$ and $q$, then $PQ$
has a unique (up to unit migration) factorization $Q' P'$ modeled on $qp$.
This process of switching two adjacent primes in the model was named
\emph{metacommutation} by Conway and Smith in their account of unique
factorization for Hurwitz integers (see Chapter~5 of \cite{CS}). They prove
that the prime factorization of an arbitrary nonzero Hurwitz integer is
unique up to metacommutation, unit migration, and \emph{recombination}: the
process of replacing $P P^\sigma$ with $\tilde P \tilde P^\sigma$, where $P$
and $\tilde P$ are primes of the same norm.  Another exposition of this
theorem can be found in \cite{CP}.

Conway and Smith comment that the metacommutation problem of
determining $Q'$ and $P'$ given $P$ and $Q$ does not seem to be
addressed in the literature \cite[p.~61]{CS}.  Their proof that $Q'$
and $P'$ exist yields an efficient method for computing them via the
Euclidean algorithm, but it provides little insight into the properties
of the metacommutation map.

In this paper, we analyze the metacommutation map.  Let $p$ be a rational
prime and $Q$ a Hurwitz prime of norm $q \ne p$.  For every prime $P$ of norm
$p$, we can find $Q'$ and $P'$ of norms $q$ and $p$ satisfying $PQ=Q'P'$, and
the pair $(Q',P')$ is unique up to unit migration. Thus, $P'$ is defined only
up to left multiplication by a unit, and the equation
\[
(uP)Q = (uQ') P'
\]
implies that replacing $P$ by a left associate has no effect on $P'$.
Therefore, metacommutation by $Q$ yields a permutation of the $p+1$ primes
lying above $p$.  We determine the number of fixed points (i.e., primes $P$
that commute with $Q$ modulo units) and sign of this permutation:

\begin{theorem} \label{theorem:main}
Let $p$ and $q$ be distinct rational primes, let $Q$ be a Hurwitz
prime of norm $q$, and consider the Hurwitz primes of norm $p$
modulo left multiplication by units.  Metacommutation by $Q$ permutes
these primes, and the sign of the permutation is the quadratic
character $\legendre{q}{p}$ of $q$ modulo $p$.

If $p = 2$, or if $Q$ is congruent to a rational integer modulo $p$, then
metacommutation by $Q$ is the identity permutation, and otherwise it
has $1+\legendre{\tr(Q)^2-q}{p}$ fixed points.
\end{theorem}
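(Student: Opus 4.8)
The plan is to reduce the whole problem to linear algebra over the finite field $\F_p$. The key observation is that the Hurwitz primes of norm $p$ (up to left multiplication by units) should be parametrized by something concrete modulo $p$, and metacommutation should then become a linear or fractional-linear action whose sign and fixed points can be read off directly. First I would set up the reduction map $\Hu \to \Hu/\Hu P$ for a prime $P$ of norm $p$. Since $\N(P) = p$, the quotient $\Hu/\Hu P$ is a field with $p$ elements (for odd $p$), so there is a ring homomorphism $\Hu \to \F_p$, and the prime left ideals over $p$ should correspond bijectively to such homomorphisms, or equivalently to the images of $i, j, k$ in $\F_p$, i.e., to points on the conic $x^2 + y^2 + z^2 = 0$ in $\Proj^2(\F_p)$. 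The $p+1$ primes matching the $p+1$ points on this conic is the structural fact I would lean on.

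Next I would reinterpret metacommutation in this language. Given $Q$ of norm $q$, the relation $QP \sim P'Q'$ (equality up to units, modeled on the swapped order) should translate, after reducing modulo a prime of norm $p$, into $Q$ acting by conjugation on the data parametrizing $P$. Concretely, I expect the permutation $P \mapsto P'$ to be induced by the map $v \mapsto Q v Q^{-1}$ (or $Q v Q^\sigma$) on the pure-quaternion part modulo $p$, which is an orthogonal transformation of the three-dimensional space $\langle i, j, k\rangle$ over $\F_p$ preserving the quadratic form $x^2+y^2+z^2$. Thus metacommutation by $Q$ becomes a rotation in $\mathrm{SO}_3(\F_p)$ acting on the conic of primes. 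The sign of the induced permutation of the $p+1$ points, and the count of fixed points (points fixed by the rotation, i.e., lying on its axis), then become questions about a single element of the orthogonal group.

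The sign computation is where I would do the real work. A rotation by $Q$ fixes the points of the conic lying on its rotation axis; the number of such fixed points is $0$, $1$, or $2$ depending on whether the eigenvalue $1$ eigenspace meets the conic, which is governed by a discriminant. I expect the relevant discriminant to be $\tr(Q)^2 - 4\N(Q)$ or a scalar multiple, explaining the $\legendre{\tr(Q)^2-q}{p}$ term: the element $Q$ satisfies $Q^2 - \tr(Q)Q + q = 0$, so its action on the trace-zero part has eigenvalues determined by $\tr(Q)^2 - 4q$, and after projectivizing the relevant quantity becomes $\tr(Q)^2 - q$. When $Q$ is congruent to a rational integer mod $p$, the conjugation action is trivial, giving the identity permutation; when $p=2$ there is a unique prime and the permutation is forced to be trivial. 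For the sign, the cleanest route is to relate the parity of the permutation to the determinant or spinor norm of the orthogonal transformation: an element of $\mathrm{SO}_3(\F_p)$ acting on $p+1$ points splits them into fixed points and $2$-cycles plus longer cycles according to the order of the rotation, and I would compute the sign as a product over orbits. I anticipate that the sign reduces to whether the rotation is a square in the relevant group, i.e., to the spinor norm, which equals $\legendre{q}{p}$ because $\N(Q)=q$.

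The main obstacle will be establishing the precise dictionary between metacommutation and the conjugation action $v \mapsto Q v Q^{-1}$ modulo $p$, including getting the normalization exactly right so that the fixed-point count comes out as $1 + \legendre{\tr(Q)^2-q}{p}$ rather than some variant. In particular I would need to verify that the "up to left multiplication by units" equivalence on primes matches the projective equivalence on the conic, and that the swap from order $pq$ to $qp$ corresponds to conjugation rather than its inverse or twist. Once that dictionary is pinned down, the sign-equals-$\legendre{q}{p}$ statement should follow from a clean orbit-counting or spinor-norm argument; the fixed-point statement should follow from solving $Qv \equiv vQ \pmod{\Hu P}$ and recognizing the resulting quadratic condition. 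The delicate bookkeeping in the orthogonal-group computation over $\F_p$, especially handling the interplay between $\det = 1$ and the permutation sign, is where I expect to spend most of the effort.
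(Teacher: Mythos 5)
Your plan follows the same route as the paper's actual proof---parametrize the primes of norm $p$ by points of the conic $x^2+y^2+z^2=0$ in $\Proj^2(\F_p)$, identify metacommutation by $Q$ with conjugation by $\overline{Q}$ acting through $SO_3(\F_p)$, read off fixed points from eigenvalues, and get the sign from a square-class computation---but several of your steps fail as stated. The first is the setup: $\Hu P$ is only a \emph{left} ideal, so $\Hu/\Hu P$ is not a ring at all; its cardinality is $\N(P)^2=p^2$, not $p$; and for odd $p$ there are \emph{no} ring homomorphisms $\Hu\to\F_p$ whatsoever, since $\Hu/p\Hu\cong M_2(\F_p)$ is a simple ring. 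So the correspondence you propose, between prime left ideals over $p$ and homomorphisms $\Hu \to \F_p$, does not exist. The paper reaches the conic differently: the reduction $\Pi=\overline{\Hu P}\subseteq \Hu/p\Hu$ is a two-dimensional left ideal, it contains a unique trace-zero element $t_P$ up to scaling (Lemma~\ref{tracezero}), and $\N(t_P)=0$; the coordinates of $t_P$ give the conic point. Your dictionary between metacommutation and conjugation is correct in substance, but it must be proved through this ideal-theoretic description ($\Pi' = \overline{Q}^{-1}\Pi\overline{Q}$), not through homomorphisms to $\F_p$.

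Second, your fixed-point count ends in a fudge. The discriminant governing whether the two nontrivial eigenvalues of the conjugation lie in $\F_p$ is $\tr(Q)^2-4q$, which up to square factors is the paper's $a^2-\bar q$, where $\overline{Q}=a+bi+cj+dk$ and $2a\equiv\tr(Q)$; there is no step of ``projectivizing'' that converts it into $\tr(Q)^2-q$, and the two quantities have different quadratic characters in general. Concretely, for $p=5$ and $Q=1+i+j$ (so $q=3$, $\tr(Q)=2$), the permutation has no fixed points, consistent with $\legendre{\tr(Q)^2-4q}{5}=\legendre{2}{5}=-1$ but not with $\legendre{\tr(Q)^2-q}{5}=\legendre{1}{5}=+1$. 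What the paper's argument proves is $1+\legendre{\tr(Q)^2-4q}{p}$, matching its Section~4 statement $1+\legendre{a^2-q}{p}$; the formula with $\tr(Q)^2-q$ cannot be derived, and your attempt to force it is exactly the unjustifiable step. Third, the sign computation---the heart of the theorem---appears in your proposal only as a hope. Making ``sign equals spinor norm'' into a proof requires: (i) restricting $\phi_Q$ to the orthogonal complement of its axis $(b,c,d)$, landing in a group $SO(g_t)$ proved to be cyclic of order $p-\legendre{t}{p}$ (Proposition~\ref{prop:cyclic}) and to act simply transitively on each nonzero level set of the form, so that the permutation consists of $\ell=|SO(g_t)|/\ord(\psi_Q)$ equal cycles and has sign $(-1)^\ell$; and (ii) proving that $\ell$ is even iff $4a^2/\bar q$ is a square mod $p$, which needs separate arguments in the split and nonsplit cases (Proposition~\ref{prop:quadchar}). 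Moreover, your ``rotation about an axis'' picture assumes $\phi_Q$ is semisimple: when $\bar q=a^2$ but $(b,c,d)\neq(0,0,0)$, $\phi_Q$ is a nontrivial unipotent, the eigenvalue analysis collapses, and the paper needs a separate argument---$(\phi_Q-I)^3=0$ gives $\phi_Q^p=I$, forcing a single $p$-cycle on the $p$ non-fixed points---and the case $a=0$ (where the complement acts by $-I$) also requires its own treatment. Without these pieces, your text is an accurate roadmap of the paper's proof rather than a proof.
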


The simplest reason why $p$ and $q$ could determine the sign of the
permutation would be if they determined its cycle structure. However, the
fixed point count shows that this is not true.  For example, the
metacommutation permutations of the primes of norm $3$ by $1+2i$ and $i+2j$
have $0$ and $2$ fixed points, respectively.

Note that when $P$ is a fixed point under metacommutation by $Q$, it is both
a left and a right divisor of $PQ=Q'P$. See \cite{AADGS} for an analysis of
which Hurwitz integers have common left/right divisors and how many they
have.

Theorem~\ref{theorem:main} is trivial to check when $p = 2$, so we shall
assume from now on that $p$ is odd.

\section{A conic associated to the primes above $p$}

Let $P$ be a Hurwitz prime of odd norm $p$, and let $\Pi$ be
the reduction $\overline{\Hu P}$ of the left ideal $\Hu P$
modulo $p$; it is a left ideal of the quaternion algebra
$\overline{\Hu} = \Hu/p \Hu$ over $\F_p$. This quaternion
algebra is split, since its norm form, being a nondegenerate
quadratic form in four variables, has a nontrivial zero.
(Recall that it is a standard exercise to show, using the
pigeonhole principle, that for $\alpha,\beta \in \F_p \setminus
\{0\}$, the equation $\alpha x^2 + \beta y^2 = 1$ has a
solution $(x,y) \in \F_p^2$.)

Therefore $\overline{\Hu} \cong M_2(\F_p)$, and since every
finite-dimensional module over the matrix algebra $M_2(\F_p)$ is a
direct sum of standard modules $\F_p^2$, we see that $\Pi$ must be
even-dimensional. It is not the zero ideal since it contains
$\overline{P}$, and it is not the entire algebra since it is
annihilated by right multiplication by $\overline{P^\sigma}$ (because
$P P^\sigma = p$).  Thus, $\Pi$ must be two-dimensional inside the
four-dimensional algebra $\overline{\Hu}$.

In fact, prime ideals in $\Hu$ lying over $p$ are in one-to-one
correspondence with two-dimensional left ideals of $\overline{\Hu}$.
Specifically, taking the inverse image under the reduction map gives a
bijection between ideals in $\overline{\Hu}$ and ideals in $\Hu$ that
contain $p$.  Ideals in $\Hu$ containing $p$ are generated by divisors
of $p$, which are $1$, $p$, and the primes of norm $p$, and only the
latter correspond to two-dimensional ideals in $\overline{\Hu}$.  Thus,
studying the primes of norm $p$ is equivalent to studying the proper
ideals in $\overline{\Hu}$.

\begin{lemma} \label{tracezero}
There is a unique nonzero element $t = xi + yj + zk$ of trace zero in
$\Pi$, up to scaling.
\end{lemma}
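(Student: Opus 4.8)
The plan is to treat the reduced trace as a linear functional on the two-dimensional space $\Pi$. Since $p$ is odd, $\tr(a + bi + cj + dk) = 2a$ vanishes exactly when $a = 0$, so the trace-zero elements of $\overline{\Hu}$ form the three-dimensional subspace $V_0 = \F_p i + \F_p j + \F_p k$, and a nonzero trace-zero element of $\Pi$ is precisely a nonzero element of $\Pi \cap V_0$. The restriction $\tr \colon \Pi \to \F_p$ is linear, so its kernel $\Pi \cap V_0$ has dimension at least $\dim \Pi - 1 = 1$; this already produces a nonzero trace-zero element $t = xi + yj + zk$. Uniqueness up to scaling then amounts to showing that this kernel is exactly one-dimensional, equivalently that $\tr$ does not vanish identically on $\Pi$, i.e.\ that $\Pi \not\subseteq V_0$.

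The heart of the argument is to rule out $\Pi \subseteq V_0$. Here I would exploit that $\Pi$ is a left ideal: for $s, t \in \Pi \subseteq \overline{\Hu}$ we have $st \in \overline{\Hu}\,\Pi \subseteq \Pi$. A direct computation with $i,j,k$ shows that for trace-zero quaternions $s = s_1 i + s_2 j + s_3 k$ and $t = t_1 i + t_2 j + t_3 k$ the scalar part of $st$ equals $-(s_1 t_1 + s_2 t_2 + s_3 t_3)$, so that $\tr(st) = -2\,(s \cdot t)$, where $s \cdot t$ denotes the inner product of the component vectors. If $\Pi$ were contained in $V_0$, then every product $st$ with $s, t \in \Pi$ would again lie in $V_0$ and hence have trace zero, forcing $s \cdot t = 0$ for all $s, t \in \Pi$ since $p$ is odd. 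Taking $s = t$ gives $\N(s) = s \cdot s = 0$ for every $s \in \Pi$, so $\Pi$ would be totally isotropic for the norm form restricted to $V_0$. But $\N$ restricts on $V_0$ to the nondegenerate ternary form $b^2 + c^2 + d^2$, which admits no two-dimensional totally isotropic subspace (its Witt index is at most $1$). This contradicts $\dim \Pi = 2$ and completes the proof.

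The main obstacle is exactly this last step: the dimension count alone yields only existence, and one must use the multiplicative structure of the left ideal, not merely the fact that $\Pi$ is a subspace, to separate the genuine case from the degenerate possibility $\Pi \subseteq V_0$. An alternative route to the same conclusion uses the isomorphism $\overline{\Hu} \cong M_2(\F_p)$ established above: a two-dimensional left ideal has the form $\{B : B|_W = 0\}$ for a line $W \subseteq \F_p^2$ and contains a rank-one idempotent, whose matrix trace, which equals the reduced trace under the isomorphism, is $1 \neq 0$; this again shows that $\tr$ is not identically zero on $\Pi$.
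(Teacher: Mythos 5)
Your proof is correct, but your key step differs from the paper's. Both arguments start identically: the trace is a linear functional on the two-dimensional space $\Pi$, so its kernel is at least one-dimensional, giving existence; the issue is ruling out the degenerate case $\Pi \subseteq \F_p i + \F_p j + \F_p k$. The paper does this with a single multiplication: if $xi+yj+zk \in \Pi$ is nonzero with, say, $x \neq 0$, then left-multiplying by $i$ (allowed since $\Pi$ is a left ideal) produces $-x + yk - zj \in \Pi$, which visibly has nonzero trace $-2x$, so the kernel of the trace is a proper subspace of $\Pi$. You instead multiply two elements of $\Pi$ together, use $\tr(st) = -2(s \cdot t)$ to conclude that $\Pi \subseteq V_0$ would force $\Pi$ to be a two-dimensional totally isotropic subspace of the nondegenerate ternary form $b^2+c^2+d^2$, and contradict the Witt index bound. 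Both exploit the left ideal property, but the paper's argument is shorter and needs nothing beyond one explicit quaternion product, while yours is coordinate-free and makes visible the structural reason the lemma holds, connecting it to the isotropy of the norm form on $\Pi$ (a fact the paper records immediately after this lemma). Your second route via $\overline{\Hu} \cong M_2(\F_p)$ and a rank-one idempotent of trace $1$ is also valid and arguably the most conceptual of the three, though it relies on the classification of left ideals of the matrix algebra, which the paper only needs in weaker form.
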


\begin{proof}
The reduced trace gives a linear functional on the two-dimensional
space $\Pi$. Therefore its kernel $K = \ker(\tr)$ is at least
one-dimensional. Furthermore, $K$ cannot be all of $\Pi$, since if $xi
+ yj + zk \in K \setminus\{0\}$, then at least one of $x,y,z$ is not
zero, say $x$. Then multiplying on the left by $i$, we get $-x + yk -
zj$, which is in $\Pi$ (because $\Pi$ is a left ideal in
$\overline{\Hu}$), but not in $K$. Therefore $K$ has dimension $1$.
\end{proof}

Next, we observe that for every $t \in \Pi$, the reduced norm $\N(t)
\in \F_p$ must be zero, since $t = \overline{h P}$ for some $h \in \Hu$
and $\N(hP) = \N(h) \N(P) = p \N(h)$, which vanishes modulo $p$.

Let $C_p \subset \Proj^2(\F_p)$ be the conic defined by $x^2 + y^2 +
z^2 = 0$. Given a prime $P$ of norm $p$, let $t_P$ be the element of
$\Pi$ from Lemma~\ref{tracezero} (scaled arbitrarily), and let $c_P$ be
the corresponding point on the conic $C_p$.

\begin{proposition}
The map $P \mapsto c_P$ is a bijection between primes lying over $p$ up
to left multiplication by units and points on the conic $C_p$.
\end{proposition}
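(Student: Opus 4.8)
The plan is to exhibit the map as a genuine bijection by combining an injectivity argument with a cardinality count, and I expect the heart of the matter to be the structural observation that $t_P$ already generates all of $\Pi$. First I would record well-definedness: left multiplication of $P$ by a unit does not change the left ideal $\Hu P$, hence does not change its reduction $\Pi$, hence does not change the line of trace-zero elements furnished by Lemma~\ref{tracezero} nor the projective point $c_P$. The observation immediately preceding the statement shows that $\N(t_P) = x^2 + y^2 + z^2 = 0$, so $c_P$ genuinely lies on $C_p$, and the map is well-defined on primes over $p$ modulo left units.

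For injectivity, the key fact is that $\Pi$, being a two-dimensional left ideal of $\overline{\Hu} \cong M_2(\F_p)$, is a \emph{minimal} left ideal and is therefore generated as a left ideal by any one of its nonzero elements; in particular $\Pi = \overline{\Hu}\,t_P$. Thus the projective point $c_P$ recovers $t_P$ up to scaling, which recovers $\Pi$, which in turn recovers the prime ideal $\Hu P$ via the bijection established above between two-dimensional left ideals of $\overline{\Hu}$ and primes lying over $p$. Consequently $c_P = c_{P'}$ forces $\Hu P = \Hu P'$, i.e.\ $P$ and $P'$ agree up to left multiplication by a unit.

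To finish I would count both sides. The introduction records exactly $p+1$ primes over the odd prime $p$ up to left units. On the other side, $C_p$ is a smooth plane conic over $\F_p$ possessing an $\F_p$-point (the form $x^2 + y^2 + z^2$ has a nontrivial zero, since $-1$ is a sum of two squares modulo $p$), and such a conic is isomorphic to $\Proj^1$ and so has exactly $p+1$ points. An injection between finite sets of equal cardinality is a bijection, which completes the proof.

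The one place warranting care is the generation claim $\Pi = \overline{\Hu}\,t_P$, and it is worth verifying concretely: since $t_P$ has trace zero we have $t_P^\sigma = -t_P$, whence $t_P^2 = -t_P t_P^\sigma = -\N(t_P) = 0$, so $t_P$ is a nonzero nilpotent and hence a rank-one element of $M_2(\F_p)$. A short computation shows $\overline{\Hu}\,t_P$ is then exactly two-dimensional, forcing equality with $\Pi$. This same computation supplies an explicit inverse—sending $[x:y:z] \in C_p$ to the prime attached to the two-dimensional ideal $\overline{\Hu}(xi + yj + zk)$—and thereby yields surjectivity directly, should one prefer to bypass the point count on the conic.
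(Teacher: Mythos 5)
Your proof is correct, but your primary route differs from the paper's. The paper establishes the bijection by exhibiting the inverse map directly: it sends a point $c = (x,y,z)$ of $C_p$ to the left ideal $\Pi_c = \overline{\Hu}(xi+yj+zk)$, observes that $\Pi_c$ is nonzero and proper (every $u \in \Pi_c$ satisfies $u(xi+yj+zk)=0$), hence two-dimensional by the module theory of $M_2(\F_p)$ already set up, and therefore corresponds to a prime over $p$. You instead prove injectivity (via the minimality of $\Pi$ as a left ideal, so that $\Pi = \overline{\Hu}\,t_P$ and the point recovers the ideal) and then finish by a cardinality count. The counting step deserves a caution: the corollary immediately following this proposition in the paper \emph{derives} the count of $p+1$ primes over $p$ from the bijection, so invoking that count here forfeits the corollary and, within the paper's own architecture, is essentially circular --- the count is well known externally (it goes back to Hurwitz), but the paper is deliberately re-deriving it. Fortunately your closing paragraph supplies exactly the repair: the observation that $t^2 = -\N(t) = 0$ makes $xi+yj+zk$ a nonzero nilpotent, hence rank one in $M_2(\F_p)$, so $\overline{\Hu}(xi+yj+zk)$ is two-dimensional for any $[x:y:z] \in C_p$; this yields the inverse map and surjectivity with no appeal to the count, and that version coincides with the paper's proof, with a concrete rank computation in place of the paper's annihilator argument. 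Your explicit injectivity argument via minimality of $\Pi$ is a worthwhile addition that the paper leaves implicit.
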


\begin{proof}
Because primes lying over $p$ correspond to two-dimensional ideals of
$\overline{\Hu}$, it suffices to deal with those ideals. Given a point
$c \in C_p$ with homogeneous coordinates $(x,y,z)$, let $\Pi_c =
\overline{\Hu}(xi+yj+zk)$. It is a nonzero left ideal and is not the
full ring $\overline{\mathcal H}$, because every element $u \in \Pi_c$
satisfies $u (xi+yj+zk) = 0$. This describes the inverse map and
establishes the bijection.
\end{proof}

Since the conic $C_p$ is smooth, it has exactly $p+1$ points. We
therefore recover the well-known count of the number of primes above
$p$.

\begin{corollary}
Let $p$ be an odd rational prime.  Then the number of Hurwitz primes
above $p$, up to left multiplication by units, is $p+1$.
\end{corollary}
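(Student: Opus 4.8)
The plan is to obtain the count directly from the Proposition, which sets up a bijection between the Hurwitz primes above $p$ (up to left multiplication by units) and the $\F_p$-points of the conic $C_p \colon x^2+y^2+z^2 = 0$ in $\Proj^2(\F_p)$. Thus the corollary reduces entirely to counting the points of $C_p$, and I would carry this out by the classical parametrization of a smooth conic possessing a rational point, which identifies it with $\Proj^1(\F_p)$ and hence shows it has $p+1$ points.

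First I would verify that $C_p$ is smooth: the gradient of $x^2+y^2+z^2$ is $(2x,2y,2z)$, which vanishes simultaneously only at the origin, and since $p$ is odd we have $2 \neq 0$ in $\F_p$, so no point of projective space is singular. Next I would exhibit a rational point. This is already guaranteed by the correspondence with primes (there is at least one prime of norm $p$), but it also follows directly from the pigeonhole fact recalled in the text: taking $\alpha = \beta = -1$, the equation $-x^2 - y^2 = 1$ has a solution $(x,y) \in \F_p^2$, and then $(x : y : 1)$ lies on $C_p$.

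Finally, fixing such a rational point $c_0 \in C_p$, I would project from $c_0$. Sending a point $c \in C_p$ to the line joining $c_0$ and $c$ defines a map from $C_p$ to the pencil of lines through $c_0$, which is a copy of $\Proj^1(\F_p)$. Substituting a parametrization of any such line into the quadratic equation yields a degree-two polynomial that already has the root corresponding to $c_0$, so its second root is a rational function of the line's slope; this recovers the point of $C_p$ on that line and produces an inverse. Since $\Proj^1(\F_p)$ has exactly $p+1$ points, so does $C_p$, and combining this with the Proposition gives exactly $p+1$ primes above $p$. The one place demanding care is this parametrization: one must treat the tangent line at $c_0$, which meets $C_p$ only at $c_0$, so that the correspondence with $\Proj^1(\F_p)$ is a genuine bijection and not merely a surjection. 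This is routine for a smooth conic, and it is precisely where smoothness is used.
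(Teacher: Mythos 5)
Your proposal is correct and follows essentially the same route as the paper: the corollary is deduced from the Proposition's bijection together with the fact that the smooth conic $C_p$ is isomorphic to $\Proj^1(\F_p)$ and hence has $p+1$ points. The only difference is that you spell out the standard projection-from-a-point argument (smoothness, existence of a rational point, handling of the tangent line) that the paper invokes implicitly by citing smoothness of $C_p$.
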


The proof given here is fundamentally the same as that in
\cite{Pe}. In Vorlesung~9 of \cite{H}, Hurwitz develops a
similar method to count left ideals in $\overline{\Hu}$,
although he does not phrase it in terms of the conic $C_p$.
From a more abstract perspective, $C_p$ is a Severi-Brauer
variety (see \S 6 of Chapter~X in \cite{Se}). For a
generalization to\break 
Eichler orders in quaternion algebras over
number fields, we refer the reader to Th\'eor\`eme~II.2.3
in~\cite{Vi} or Lemma~7.2 in~\cite{KV}.

\section{Metacommutation}

Suppose $P$ and $Q$ are Hurwitz primes lying over distinct rational
primes $p$ and $q$, respectively.  By metacommutation, there are primes
$Q'$ and $P'$ of norms $q$ and $p$ such that
\[
PQ = Q' P',
\]
and we would like to understand how they depend on $P$ and $Q$.

It is not difficult to compute $P'$ and $Q'$ efficiently as follows.
Using the Euclidean algorithm, choose $P' \in \Hu$ so that $\Hu p + \Hu
PQ = \Hu P'$. Then $P'$ divides $p$, so its norm is $1$, $p$, or $p^2$.
It cannot be $1$ since every element of $\Hu p + \Hu PQ$ has norm
divisible by $p$, and it cannot be $p^2$ since $PQ \not\in \Hu p$.
Thus, $P'$ has norm $p$. Now $PQ \in \Hu p + \Hu PQ = \Hu P'$, so there
exists $Q'$ such that $PQ = Q'P'$, and $Q'$ must have norm $q$.

As explained in the introduction, $P'$ is defined only up to left
multiplication by a unit, and replacing $P$ by a left associate has no effect
on $P'$. Therefore, metacommutation by any prime $Q$ of norm $q \neq p$ gives
a permutation $\tau_Q$ of the $p+1$ primes lying above $p$. In the rest of
the paper, we analyze this action.

Note that replacing $Q$ by a right associate $Qu$ composes the
permutation $\tau_Q$ with the permutation $\gamma_u$ sending $P$ to
$Pu$, because
\[
PQu = Q'P'u = Q'(P'u),
\]
which implies $\tau_{Qu} = \gamma_u \circ \tau_Q$. Similarly,
\[
(Pu^{-1})(uQ) = PQ = Q'P'
\]
implies $\tau_{uQ} \circ \gamma_{u^{-1}} = \tau_Q$, or $\tau_{uQ} = \tau_Q \circ \gamma^{-1}_{u^{-1}} = \tau_Q \circ \gamma_u$.

To understand metacommutation more clearly, it is important to rephrase
it in terms of the constructions from the previous section.  We will
continue to use the notation from that section ($\Pi$, $t_P$, etc., and
the corresponding versions $\Pi'$, $t_{P'}$, etc.\ for $P'$).

\begin{theorem}
For $P'$ and $Q'$ as above, $t_{P'}$ and $(\overline{Q})^{-1} t_P
\overline{Q}$ are equal modulo scaling by $\F_p^\times$.
\end{theorem}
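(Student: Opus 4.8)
The plan is to pass to the reduction $\overline{\Hu} \cong M_2(\F_p)$ and convert the metacommutation identity $PQ = Q'P'$ into a relation between the two-dimensional left ideals $\Pi = \overline{\Hu P}$ and $\Pi' = \overline{\Hu P'}$. The first thing to record is that $\overline{Q}$ is a unit in $\overline{\Hu}$: since $\overline{Q}\,\overline{Q^\sigma} = \overline{\N(Q)} = \overline{q}$ and $q \neq p$, the scalar $\overline{q} \in \F_p^\times$ is invertible, so $\overline{Q}$ has the two-sided inverse $\overline{q}^{-1}\overline{Q^\sigma}$. This is exactly what makes the expression $(\overline{Q})^{-1} t_P \overline{Q}$ meaningful, and the same reasoning applies to $\overline{Q'}$.

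The key step is the ideal identity $\Pi' = \Pi\,\overline{Q}$. To establish it, I would reduce the equality of left ideals $\Hu PQ = \Hu Q'P'$ modulo $p$. The left-hand side reduces to $\overline{\Hu}\,\overline{P}\,\overline{Q} = \Pi\,\overline{Q}$, while the right-hand side reduces to $\overline{\Hu}\,\overline{Q'}\,\overline{P'}$; because $\overline{Q'}$ is a unit, we have $\overline{\Hu}\,\overline{Q'} = \overline{\Hu}$, so the right-hand side collapses to $\overline{\Hu}\,\overline{P'} = \Pi'$. Hence $\Pi' = \Pi\,\overline{Q}$, which is again two-dimensional since right multiplication by the unit $\overline{Q}$ is a bijection of $\overline{\Hu}$.

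With this in hand, I would verify that $(\overline{Q})^{-1} t_P \overline{Q}$ is the trace-zero generator of $\Pi'$ provided by Lemma~\ref{tracezero}, and then invoke its uniqueness up to scaling. It has trace zero by cyclicity of the reduced trace, $\tr\bigl((\overline{Q})^{-1} t_P \overline{Q}\bigr) = \tr(t_P) = 0$, and it is nonzero because $t_P \neq 0$ and $\overline{Q}$ is invertible. To place it inside $\Pi'$, note that $\Pi$ is a left ideal and $\overline{Q}$ a unit, so $(\overline{Q})^{-1}\Pi = \Pi$; thus $(\overline{Q})^{-1} t_P \in \Pi$, and right-multiplying by $\overline{Q}$ gives $(\overline{Q})^{-1} t_P \overline{Q} \in \Pi\,\overline{Q} = \Pi'$. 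Since the trace-zero part of $\Pi'$ is one-dimensional, this element must agree with $t_{P'}$ up to an $\F_p^\times$ scalar, which is the assertion. The only genuine obstacle here is the ideal identity $\Pi' = \Pi\,\overline{Q}$; once that is in place, the trace computation and the membership check are routine, resting only on conjugation-invariance of the reduced trace and on the fact that multiplying a one-sided ideal by a unit on the corresponding side preserves it.
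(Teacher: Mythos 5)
Your proof is correct and follows essentially the same route as the paper: both arguments reduce the metacommutation identity to the ideal identity $\Pi' = \Pi\,\overline{Q} = (\overline{Q})^{-1}\Pi\,\overline{Q}$ and then conclude via Lemma~\ref{tracezero} together with the conjugation-invariance of the reduced trace. The only difference is cosmetic, in how $\Pi\,\overline{Q} = \Pi'$ is obtained: the paper proves the containment $\Pi\,\overline{Q} \subseteq \Pi'$ and upgrades it to equality by a dimension count, whereas you reduce the ideal equality $\Hu PQ = \Hu Q'P'$ modulo $p$ and use the invertibility of $\overline{Q'}$ to collapse the right-hand side, which avoids the dimension count.
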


\begin{proof}
Because $PQ=Q'P'$, we see that $\overline{P} \overline{Q}$ is contained
in $\Pi'$, which is a left ideal. Therefore any left multiple of
$\overline{P} \overline{Q}$ is also in $\Pi'$, and hence $\Pi
\overline{Q} \subseteq \Pi'$.  In fact, equality holds, since
$\overline{Q}$ is invertible in $\overline{\Hu}$ and thus both $\Pi
\overline{Q}$ and $\Pi'$ are two-dimensional vector spaces over $\F_p$.
Finally, since $\overline{Q}$ is invertible and these are left ideals,
\[
\overline{Q}^{-1} \Pi \overline{Q} = \Pi \overline{Q} = \Pi'.
\]

In particular, since $t_P \in \Pi$, we must have $(\overline{Q})^{-1}
t_P \overline{Q} \in \Pi'$. However, the vector $(\overline{Q})^{-1}
t_P \overline{Q}$ has trace zero and is nonzero, and therefore it must
equal $t_{P'}$ up to scaling.
\end{proof}

Now suppose that $t_P = xi+yj+zk$ and $\overline{Q} =
a+bi+cj+dk$ with $\bar{q} = a^2 + b^2 + c^2 + d^2$. Then
explicit calculation shows that $(\overline{Q})^{-1} t_P
\overline{Q} = x'i+y'j+z'k$ with
\[
\begin{pmatrix}
x'\\
y'\\
z'
\end{pmatrix}
= \frac{1}{\bar{q}}
\begin{pmatrix}
a^2 + b^2 - c^2 - d^2 & 2ad + 2bc & -2ac + 2bd \\
-2ad + 2bc & a^2 + c^2 - b^2 - d^2 & 2ab + 2cd \\
2ac + 2bd & -2ab + 2cd & a^2 + d^2 - b^2 - c^2
\end{pmatrix}
\begin{pmatrix}
x\\
y\\
z
\end{pmatrix}
.
\]
(Note that the above matrix is the standard parametrization of
$SO(3)$ by $S^3$ via the double cover $SU(2) \to SO(3)$.) Let
us denote this linear transformation by $\phi_Q$. Note that
$\phi_Q$ lies in $SO_3(\F_p)$.  This is not hard to see
conceptually: $\phi_Q \in O_3(\F_p)$ because conjugation
preserves the quadratic form $v v^\sigma$ on quaternions $v$,
and $\det(\phi_Q)=1$ because left or right multiplication by a
quaternion $v$ has determinant $\N(v)^2$ on $\overline{\Hu}$
(so conjugation has determinant $1$ as a linear transformation
on $\overline{\Hu}$ and hence also on the purely imaginary
subspace, since the identity element of $\overline{\Hu}$ is
fixed).

The vector $(b,c,d)$ is an eigenvector of $\phi_Q$ with eigenvalue $1$
(unless it is the zero vector, in which case $\phi_Q$ is the identity
transformation), since $bi+cj+dk$ obviously commutes with
$\overline{Q}$. We are especially interested in eigenvectors that lie
on the cone $K_p$ given by $x^2 + y^2 + z^2 = 0$, as these correspond
to fixed points of the permutation $\tau_Q$. Note that the eigenvector
$(b,c,d)$ lies on the cone $K_p$ iff $\bar{q} = a^2$.

\begin{lemma} \label{lemma:charpoly}
The characteristic polynomial of $\phi_Q$ is
\[
(x-1)\left(x^2  + 2\left(1 - \frac{2a^2}{\bar{q}}\right) x + 1 \right) .
\]
\end{lemma}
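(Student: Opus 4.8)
The plan is to read off the characteristic polynomial from the general structure of an element of $SO_3(\F_p)$ together with a single short trace computation, rather than by expanding the $3 \times 3$ determinant directly. Write the characteristic polynomial of $\phi_Q$ as $x^3 - c_1 x^2 + c_2 x - c_3$, where $c_1 = \tr(\phi_Q)$, where $c_2$ is the sum of the principal $2\times 2$ minors, and where $c_3 = \det(\phi_Q)$. Since we already know $\phi_Q \in SO_3(\F_p)$, we have $c_3 = 1$. Moreover, because $\phi_Q$ preserves the form $x^2+y^2+z^2$, whose Gram matrix is the identity, its inverse equals its transpose, so $\tr(\phi_Q^{-1}) = \tr(\phi_Q^{T}) = c_1$; combined with the identity $c_2 = \det(\phi_Q)\,\tr(\phi_Q^{-1})$ for invertible $3\times 3$ matrices, this gives $c_2 = c_1$. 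Hence the characteristic polynomial has the form $x^3 - c_1 x^2 + c_1 x - 1$.

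Next I would factor out the known root $x = 1$ (which corresponds to the eigenvector $(b,c,d)$, and which is in any case visible directly since the coefficients sum to $0$), obtaining
\[
x^3 - c_1 x^2 + c_1 x - 1 = (x-1)\bigl(x^2 + (1 - c_1)x + 1\bigr).
\]
It then remains only to identify the coefficient $c_1 = \tr(\phi_Q)$.

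Finally, I would compute this trace by summing the three diagonal entries of the displayed matrix:
\[
\tr(\phi_Q) = \frac{(a^2+b^2-c^2-d^2) + (a^2+c^2-b^2-d^2) + (a^2+d^2-b^2-c^2)}{\bar q} = \frac{3a^2 - b^2 - c^2 - d^2}{\bar q}.
\]
Using $\bar q = a^2+b^2+c^2+d^2$ to replace $b^2+c^2+d^2$ by $\bar q - a^2$ turns this into $(4a^2 - \bar q)/\bar q = 4a^2/\bar q - 1$, whence $1 - c_1 = 2 - 4a^2/\bar q = 2(1 - 2a^2/\bar q)$, which is exactly the coefficient appearing in the claimed quadratic factor.

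The computation is entirely routine, so I do not expect a serious obstacle; the only point requiring care is the justification that $c_2 = c_1$, which rests on the orthogonality relation $\phi_Q^{-1} = \phi_Q^{T}$ together with $\det(\phi_Q) = 1$. If one preferred to avoid the minor identity, one could instead observe that orthogonality forces the eigenvalues over $\overline{\F_p}$ to occur in reciprocal pairs, so that together with $\det(\phi_Q) = 1$ they must be $1, \lambda, \lambda^{-1}$; this immediately yields the factor $x-1$ and a reciprocal quadratic $x^2 + (1 - c_1)x + 1$, after which the same trace computation finishes the argument.
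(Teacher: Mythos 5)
Your proof is correct, and while it shares its overall skeleton with the paper's, the key computation is done differently. Both arguments use the $SO_3$ structure to reduce the lemma to determining the single quantity $c_1 = \tr(\phi_Q)$: the paper cites that $1$ is an eigenvalue and $\det(\phi_Q)=1$, which forces the shape $(x-1)(x^2+\beta x+1)$ with $\beta = 1-c_1$, while you derive $c_2=c_1$ from $\mathrm{adj}(\phi_Q)=\det(\phi_Q)\phi_Q^{-1}$ together with $\phi_Q^{-1}=\phi_Q^{T}$ (and your reciprocal-eigenvalue variant at the end is essentially the paper's formulation); these structural steps are equivalent. The genuine divergence is in computing the trace. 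The paper deliberately avoids the explicit matrix: it computes the trace of $v \mapsto (\overline{Q})^{-1} v \overline{Q}$ on the four-dimensional algebra $\overline{\Hu}$ coordinate-free, expanding $(a+u)v(a-u) = a^2v + a(uv-vu) - uvu$ and showing $v \mapsto uvu$ is traceless by exhibiting an invertible map that anticommutes with it. You instead sum the three diagonal entries of the matrix displayed just before the lemma. Your route is shorter and makes the point that the "cumbersome" direct computation the paper warns against is only the expansion of the full determinant, not the trace; its cost is that it rests on the displayed matrix, which the paper asserts by "explicit calculation" but never verifies, whereas the paper's coordinate-free argument is independent of those entries (and would carry over to conjugation in any quaternion algebra), which is presumably why the authors chose it.
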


It follows that the eigenvalue $1$ has multiplicity greater than $1$
iff $\bar{q} = a^2$, and the eigenvalue $-1$ occurs iff $a = 0$, in
which case it occurs with multiplicity $2$.  The simplest way to prove
Lemma~\ref{lemma:charpoly} is by direct computation using the matrix
for $\phi_Q$, but that is cumbersome.  As an alternative, we give a
coordinate-free proof.

\begin{proof}
We begin with the observations that $1$ is always an eigenvalue and
that $\det(\phi_Q)=1$.  To determine the characteristic polynomial, all
we need to show is that
\[
\tr(\phi_Q) = \frac{4a^2}{\bar{q}} - 1.
\]
Equivalently, on the four-dimensional space $\overline{\Hu}$, the map
$v \mapsto (\overline{Q})^{-1} v \overline{Q}$ should have trace
$4a^2/\bar{q}$, because the identity element of $\overline{\Hu}$ is an
eigenvector with eigenvalue $1$.  Let $u=bi+cj+dk$ be the imaginary
part of $\overline{Q}$, so $\overline{Q} = a+u$ and
$(\overline{Q})^{-1} = (a-u)/\bar{q}$.  Then we must show that the
trace of $v \mapsto (a+u)v(a-u)$ on $\overline{\Hu}$ is $4a^2$.  We can
write
\[
(a+u)v(a-u) = a^2 v + a(uv - vu) - uvu.
\]
The map $v \mapsto a^2 v$ has trace $4a^2$, and the maps $v \mapsto uv$
and $v \mapsto vu$ have trace $2\tr(u)=0$.  Thus, all that remains is
to show that $v \mapsto uvu$ has trace $0$.  Call this map $\varphi$.
We will construct an invertible map $\psi$ that anticommutes with
$\varphi$, so that $\psi \varphi \psi^{-1} = - \varphi$ and hence
$\tr(\varphi)=0$.  Specifically, we set $\psi(v) = u' v$, where
$u'$ is a nonzero imaginary quaternion that anticommutes with $u$.  To
see that such a $u'$ exists, note that $v \mapsto uv+vu$ sends
imaginary quaternions $v$ to scalars, since it is easy to check that
$(uv+vu)^\sigma = uv+vu$.  Thus, it has a nontrivial kernel, so $u'$
exists and $\tr \varphi = 0$, as desired.
\end{proof}

\section{Fixed points}
\label{sec:fixed}

We now discuss the fixed points of the permutation $\phi_Q$, considered
as a projective transformation. In particular, we will be most
interested in those fixed points that lie on the conic $C_p$ defined by
$x^2 + y^2 + z^2 = 0$. In other words, we are interested in
eigenvectors of $\phi_Q$ that lie on the cone $K_p \subset \F_p^3$
given by the same equation.

\begin{lemma} \label{easylem}
Let $v$ be an eigenvector of $\phi_Q$ of eigenvalue $\lambda$. If
$\lambda \neq \pm 1$, then $v$ lies on the cone $K_p$.
\end{lemma}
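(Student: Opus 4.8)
The plan is to exploit the fact, established in the previous section, that $\phi_Q$ lies in $SO_3(\F_p)$: in particular, it preserves the quadratic form $\N(v) = x^2 + y^2 + z^2$ on the three-dimensional space of imaginary quaternions. The entire argument is a one-line consequence of this invariance, so the work is more in setting it up correctly than in any computation.

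First I would let $v$ be an eigenvector of $\phi_Q$ with eigenvalue $\lambda$, so that $\phi_Q(v) = \lambda v$. Applying the invariance of the norm form under $\phi_Q$, I would compute
\[
\N(v) = \N(\phi_Q(v)) = \N(\lambda v) = \lambda^2 \N(v),
\]
where the middle equality uses $\phi_Q \in SO_3(\F_p) \subseteq O_3(\F_p)$ and the last uses that $\N$ is a quadratic (degree-two homogeneous) form. Rearranging gives $(\lambda^2 - 1)\N(v) = 0$. Since $p$ is odd, the only solutions of $\lambda^2 = 1$ in any field of characteristic $p$ are $\lambda = \pm 1$; so the hypothesis $\lambda \neq \pm 1$ forces $\lambda^2 - 1 \neq 0$, and therefore $\N(v) = 0$. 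This is exactly the statement that $v$ lies on the cone $K_p$.

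The one point requiring care is the field of definition of $\lambda$. By Lemma~\ref{lemma:charpoly} the eigenvalues other than $1$ are the roots of a quadratic factor, which may be irreducible over $\F_p$; in that case $\lambda$ and the corresponding eigenvector $v$ live in the extension $\F_{p^2}$ rather than in $\F_p$ itself. This causes no difficulty, because $\phi_Q$, having $\F_p$-rational entries, still preserves the extension of the form $\N$ to $\F_{p^2}^3$, so the displayed computation is valid verbatim over $\F_{p^2}$ and the conclusion $\N(v) = 0$ holds there. I do not expect any genuine obstacle in this proof; the only thing to watch is not to implicitly assume $\lambda \in \F_p$, since the most interesting case (the eigenvalue $-1$ aside) is precisely when the quadratic factor has no root in $\F_p$.
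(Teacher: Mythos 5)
Your proof is correct and is essentially the same as the paper's: both exploit that $\phi_Q$ is orthogonal to get $\lambda^2 \N(v) = \N(\phi_Q(v)) = \N(v)$, whence $\N(v) = 0$ when $\lambda \neq \pm 1$. Your added care about eigenvalues and eigenvectors possibly living over $\F_{p^2}$ is a sensible refinement (the paper leaves this implicit), but it does not change the argument.
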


\begin{proof}
In terms of the inner product defined by the quadratic form $\langle
v,v \rangle = v v^\sigma$, we have
\[
\lambda^2 \langle v,v \rangle = \langle \lambda v, \lambda v \rangle
= \langle \phi_Q (v), \phi_Q (v) \rangle = \langle v,v \rangle,
\]
with the last equality because $\phi_Q$ is an orthogonal
transformation.
\end{proof}

Thus, the study of the fixed points falls naturally into the following
cases.

\medskip
\noindent \emph{Case} 1A: $b = c = d = 0$.
In this case, $\phi_Q$ is the identity transformation.

\medskip
\noindent \emph{Case} 1B: $\bar{q} = a^2$ but $(b,c,d) \neq (0,0,0)$.
The characteristic polynomial of $\phi_Q$ is $(x-1)^3$; i.e., the
eigenvalue $1$ occurs with multiplicity $3$. Also, the vector $(b,c,d)$
is on the cone $K_p$, since
\[
a^2 = \bar{q} = a^2 + b^2 + c^2 + d^2.
\]
Note that $v_0 = bi + cj + dk$ commutes with $\overline{Q}$. We
claim that $\phi_Q$ has no other eigenvectors up to scaling.
Because the eigenvalue must be $1$, an eigenvector corresponds
to a purely imaginary quaternion that commutes with
$\overline{Q}$ and hence also with $v_0$. If $x$ were another
eigenvector linearly independent of $v_0$, then $1$, $x$, and
$v_0$ would generate an abelian subalgebra of $\overline{\Hu}$
of dimension at least three, which is impossible.

\medskip
\noindent \emph{Case} 2: $a = 0$.
Then $b^2 + c^2 + d^2 = \bar{q} \neq 0$, so the eigenvector
$(b,c,d)$ of eigenvalue $1$ is not on the cone $K_p$. The
eigenvalue $-1$ occurs with multiplicity $2$, so it is not
immediately clear that $\phi_Q$ is semisimple. We can see that
the $(-1)$-eigenspace is two-dimensional, as follows: the
equation $\overline{Q}x = -x\overline{Q}$ is equivalent to
$\tr(x\overline{Q}) = 0$, since $x$ and $\overline{Q}$ are
purely imaginary. The linear functional $x \mapsto
\tr(\overline{Q}x)$ has at least a two-dimensional kernel in
the three-dimensional space of imaginary quaternions, which
proves our assertion.

It still remains to be seen whether these eigenvectors are in
the cone $K_p$.  The points in the cone $K_p$ and the
$(-1)$-eigenspace are given by the simultaneous solutions of
the two equations
\begin{align*}
bx + cy + dz &= 0, \\
x^2 + y^2 + z^2 &= 0.
\end{align*}
In the projective plane, this is the intersection of a line and a
conic, so there are two solutions over the algebraic closure
$\overline{\F}_p$ if we count with multiplicity. Over the field $\F_p$
itself, there are either zero or two solutions: the number of solutions
is not one, because the line is not tangent to the conic (it would be
tangent iff $b^2+c^2+d^2=0$).

In fact, a simple discriminant calculation shows that the
solutions are defined over $\F_p$ if and only if
$-(b^2+c^2+d^2)$ is a square in $\F_p$.  Thus, there are two
distinct points of intersection when $\legendre{-q}{p}=1$ and
none when $\legendre{-q}{p} = -1$, because
$b^2+c^2+d^2=\bar{q}$.

\medskip
\noindent \emph{Case} 3: $a \neq 0$ and $\bar{q} \neq a^2$. The
latter inequality implies $b^2 + c^2 + d^2 \neq 0$, and so $(b,c,d)$
is not on the cone $K_p$. The quadratic part of the characteristic
polynomial,
\[
x^2  + 2\left(1 - \frac{2a^2}{\bar{q}}\right) x + 1 ,
\]
has two distinct roots in $\overline{\F}_p$, not equal to $\pm
1$. The discriminant of this polynomial is $-16a^2
(\bar{q}-a^2)/\bar{q}^2$, and so there are two eigenvalues in
$\F_p$ if $\legendre{a^2-q}{p} = 1$ and none if
$\legendre{a^2-q}{p} = -1$. Note that the corresponding
eigenvectors are distinct from each other and from $(b,c,d)$,
since they belong to distinct eigenvalues not equal to $\pm 1$.
Thus, there are either two or zero fixed points on the cone
according to the quadratic character of $a^2 - \bar{q}$, by
Lemma~\ref{easylem}.

We sum up these observations in the following theorem, which
unifies the different cases and is the second part of
Theorem~\ref{theorem:main}.

\begin{theorem}
Let $Q$ be a prime of norm $q$, with $\overline{Q} = a + bi + cj + dk
\in \overline{\Hu}$. If $b=c=d=0$, then $\tau_Q$ is the identity
permutation.  Otherwise, $\tau_Q$ has $1 + \legendre{a^2-q}{p}$ fixed
points.
\end{theorem}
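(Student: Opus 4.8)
The plan is to prove the final theorem by assembling the case analysis that has just been carried out in Section~\ref{sec:fixed}, since all the essential computations are already in place. The key observation is that fixed points of the permutation $\tau_Q$ correspond exactly to eigenvectors of the linear map $\phi_Q$ that lie on the cone $K_p$ (equivalently, to points on the conic $C_p$ fixed by $\phi_Q$ as a projective transformation). This correspondence follows from the earlier identification of primes above $p$ with points on $C_p$ via $P \mapsto c_P$, together with the fact that metacommutation by $Q$ acts on $t_P$ through conjugation by $\overline{Q}$, i.e.\ through $\phi_Q$. So the whole theorem reduces to counting cone-eigenvectors of $\phi_Q$, which is precisely what the cases treat.

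First I would dispose of the degenerate case $b=c=d=0$: here $\phi_Q$ is the identity (Case 1A), so every point is fixed and $\tau_Q$ is the identity permutation, matching the first sentence of the theorem. For the generic statement, I would then verify that the formula $1 + \legendre{a^2-q}{p}$ agrees with the fixed-point count in each of the remaining cases. In Case 1B ($\bar q = a^2$, $(b,c,d)\neq 0$) the only eigenvector up to scaling is $(b,c,d)$, which lies on $K_p$; here $a^2 - \bar q = 0$, so $\legendre{a^2-q}{p}=0$ and the formula gives $1$ fixed point, as required. In Case 2 ($a=0$) the $1$-eigenvector $(b,c,d)$ is off the cone, and the cone-eigenvectors come from the two-dimensional $(-1)$-eigenspace; the number of such points is $2$ or $0$ according to whether $\legendre{-q}{p}=1$ or $-1$, and since $a^2 - \bar q = -\bar q$ here, this is exactly $1+\legendre{a^2-q}{p}$. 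In Case 3 ($a\neq 0$, $\bar q\neq a^2$) the vector $(b,c,d)$ is again off the cone, and by Lemma~\ref{easylem} any eigenvector with eigenvalue $\neq\pm1$ automatically lies on $K_p$; the quadratic factor of the characteristic polynomial has two such roots in $\F_p$ (giving two fixed points) when $\legendre{a^2-q}{p}=1$ and none when $\legendre{a^2-q}{p}=-1$, again matching the formula.

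The only point requiring care—and the step I expect to be the main obstacle—is confirming that the unified formula $1+\legendre{a^2-q}{p}$ correctly interpolates between the three cases, particularly the boundary behavior of the Legendre symbol. Specifically, I must check that when $a^2-\bar q=0$ (Case 1B) the symbol is interpreted as $0$ rather than $\pm1$, so that the formula yields exactly one fixed point; and I must verify that the discriminant signs computed in Cases 2 and 3 are consistent with a single expression $a^2-\bar q$ rather than the separately-written $-\bar q$ and $a^2-\bar q$. Once these bookkeeping checks are made, the cases exhaust all possibilities (they are distinguished by the mutually exclusive and exhaustive conditions on $a$ and $\bar q - a^2 = b^2+c^2+d^2$), and the theorem follows immediately.
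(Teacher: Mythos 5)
Your proposal is correct and follows essentially the same route as the paper: the paper states this theorem as a summary of the very case analysis (1A, 1B, 2, 3) you assemble, using the identification of fixed points of $\tau_Q$ with eigenvectors of $\phi_Q$ on the cone $K_p$, the uniqueness-of-eigenvector argument in Case 1B, the line--conic intersection in Case 2, and Lemma~\ref{easylem} together with the discriminant of the quadratic factor in Case 3. The bookkeeping points you flag (interpreting $\legendre{0}{p}=0$ in Case 1B and reconciling $-\bar q$ with $a^2-\bar q$ when $a=0$) are exactly the unification the paper performs implicitly.
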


\section{Special orthogonal groups and conics over
finite fields}

Let $p$ be an odd prime and $F$ a nondegenerate binary
quadratic form over $\F_p$. In this section, we gather some
general results about $SO(F)$ and its action on affine plane
conics given by $F(x,y) = u$. We begin by recalling a few
well-known results.

\begin{lemma}
Every nondegenerate binary quadratic form $F(x,y)$ is
equivalent over $\F_p$ to $x^2 - ty^2$ for some $t \in \F_p$.
\end{lemma}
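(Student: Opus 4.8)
The plan is to reduce to a diagonal form and then exploit the representation of $1$, rather than trying to normalize the leading coefficient by scaling a single variable. Since $p$ is odd, completing the square shows that $F$ is equivalent over $\F_p$ to a diagonal form $ax^2 + by^2$, and both $a$ and $b$ are nonzero because $F$ is nondegenerate. (If the coefficient of $x^2$ happens to vanish one first swaps the variables; in the purely off-diagonal case $F = cxy$ the substitution $(x,y) \mapsto (x+y,\,x-y)$ produces a nonzero diagonal entry, after which completing the square applies.)

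The crucial point is that I do \emph{not} attempt to turn the leading coefficient $a$ into $1$ by the substitution $x \mapsto x/\lambda$, since that only multiplies $a$ by a square and $a$ need not be a square. Instead I invoke the pigeonhole fact recalled earlier in the paper: for nonzero $a,b \in \F_p$ the equation $ax^2 + by^2 = 1$ has a solution $(x_0,y_0) \in \F_p^2$. In other words, the form represents the value $1$, so the vector $v_1 = (x_0,y_0)$ satisfies $F(v_1) = 1$. In particular $v_1$ is anisotropic, so the line $\F_p v_1$ is nondegenerate.

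Next I would complete $v_1$ to a basis by orthogonalizing with respect to the symmetric bilinear form $B$ associated to $F$. Because $B(v_1,v_1) = F(v_1) = 1 \neq 0$, the orthogonal complement of $v_1$ is a one-dimensional subspace complementary to $\F_p v_1$; choosing any nonzero $v_2$ in it yields a basis $\{v_1,v_2\}$ in which $F$ is diagonal with first coefficient $F(v_1) = 1$. Writing $F(v_2) = -t$, which is nonzero by nondegeneracy, we conclude that $F$ is equivalent to $x^2 - t y^2$ in these coordinates, as required.

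Every step here is an elementary manipulation, so there is no serious obstacle; the only pitfall is precisely the tempting but invalid move of normalizing $a$ to $1$ by scaling when $a$ is a non-square. Replacing that step by the representation of $1$—which is exactly why the pigeonhole statement was recorded earlier—is what makes the argument self-contained. (One could alternatively cite the classification of nondegenerate quadratic forms over a finite field by their discriminant modulo squares, but the direct construction above avoids appealing to that theory.)
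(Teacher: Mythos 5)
Your proof is correct, and its core is the same as the paper's: diagonalize the form (possible since $p$ is odd) and then use the pigeonhole representation of $1$ by $\alpha x^2 + \beta y^2$, correctly avoiding the trap of trying to rescale a non-square leading coefficient to $1$. The two arguments part ways only in the concluding step. The paper, having noted that the diagonal form represents $1$ and has discriminant $\alpha\beta$, concludes that it is equivalent to $x^2 + \alpha\beta y^2$; this pins down an explicit value $t = -\alpha\beta$, but it implicitly relies on the facts that a form representing $1$ splits off a unit square and that the discriminant is an invariant modulo squares. You instead carry out that splitting explicitly: take $v_1$ with $F(v_1)=1$, observe that the associated bilinear form satisfies $B(v_1,v_1)=1\neq 0$, and pass to the orthogonal complement of $v_1$ to obtain a basis in which $F$ becomes $x^2 - ty^2$, with $t \neq 0$ forced by nondegeneracy. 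Your route is slightly more self-contained, since it never appeals to discriminant invariance or to the classification of binary forms over $\F_p$, at the cost of not identifying $t$ explicitly --- which the lemma as stated (``for some $t \in \F_p$'') does not require anyway.
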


\begin{proof}
First, note that since $p$ is odd, we may diagonalize the form,
so we may assume $F(x,y) = \alpha x^2 + \beta y^2$ for some
$\alpha, \beta \in \F_p$. Since $\alpha$ and $\beta$ are both
nonzero, the form represents $1$ by the pigeonhole principle.
Since it has discriminant $\alpha\beta$, it must be equivalent
to the diagonal form $x^2 + \alpha\beta y^2$.
\end{proof}

From now on, we work with the diagonal form $g_t(x,y) = x^2 - ty^2$,
where $t \in \F_p \setminus \{ 0\}$.

\begin{lemma} \label{lemma:affconic}
Let $u \in \F_p \setminus \{0\}$. The number of points on the affine
conic $x^2 - ty^2 = u$ is $p - \legendre{t}{p}$.
\end{lemma}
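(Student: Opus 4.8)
The plan is to count solutions $(x,y) \in \F_p^2$ to $x^2 - ty^2 = u$ by summing a character indicator over all values that the left-hand side can take. Specifically, I would write the number of points as
\[
\#\{(x,y) : x^2 - ty^2 = u\} = \sum_{x,y \in \F_p} \frac{1}{p}\sum_{\psi} \psi\bigl(x^2 - ty^2 - u\bigr),
\]
where $\psi$ ranges over the additive characters of $\F_p$. The trivial character contributes $p^2/p = p$, which will be the main term, and I expect the nontrivial characters to contribute the correction $-\legendre{t}{p}$. The key tool is the standard Gauss sum evaluation: for a nontrivial additive character $\psi$ and any $\alpha \in \F_p^\times$, one has $\sum_{x} \psi(\alpha x^2) = \legendre{\alpha}{p} g(\psi)$, where $g(\psi) = \sum_x \psi(x^2)$ is the quadratic Gauss sum satisfying $g(\psi)^2 = \legendre{-1}{p} p$.

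First I would separate off the trivial additive character to extract the main term $p$. Then, for each nontrivial $\psi$, I would factor the double sum over $x$ and $y$ as a product of two one-variable quadratic Gauss sums, pulling out the factor $\psi(-u)$. Using the evaluation above, the $x$-sum gives $\legendre{1}{p} g(\psi) = g(\psi)$ and the $y$-sum gives $\legendre{-t}{p} g(\psi)$, so their product is $\legendre{-t}{p} g(\psi)^2 = \legendre{-t}{p}\legendre{-1}{p} p = \legendre{t}{p} p$. Summing the resulting expression $\frac{1}{p} \cdot \psi(-u) \cdot \legendre{t}{p} p = \legendre{t}{p}\psi(-u)$ over the $p-1$ nontrivial characters, and using that $\sum_{\psi \text{ nontrivial}} \psi(-u) = -1$ because $u \neq 0$ forces the full sum over all $\psi$ to vanish, yields the correction term $-\legendre{t}{p}$.

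I expect the only real subtlety to lie in tracking the constant $\legendre{-1}{p}$ through the Gauss sum identity $g(\psi)^2 = \legendre{-1}{p} p$, so that it cancels against the $\legendre{-1}{p}$ appearing from $\legendre{-t}{p}$; getting this right is what produces the clean answer $\legendre{t}{p}$ rather than $\legendre{-t}{p}$. A cleaner alternative I would consider, avoiding Gauss sums entirely, is to count directly: as $x$ ranges over $\F_p$, I would count for each value of $x^2 - u$ the number of $y$ with $ty^2$ equal to it, using the fact that the number of square roots of $w$ is $1 + \legendre{w}{p}$ (with the convention $\legendre{0}{p} = 0$). This reduces the count to evaluating $\sum_{x} \bigl(1 + \legendre{(x^2-u)/t}{p}\bigr)$, i.e.\ to the character sum $\sum_x \legendre{x^2 - u}{p}$, whose value $-1$ for $u \neq 0$ is the classical evaluation of a quadratic character sum over a quadratic polynomial. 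Either route gives $p - \legendre{t}{p}$, and I would present whichever the paper's surrounding machinery makes most self-contained.
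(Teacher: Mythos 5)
Your proof is correct, but it takes a genuinely different route from the paper. The paper's proof is projective-geometric and takes two lines: homogenize to get a smooth projective conic, which is isomorphic to $\Proj^1(\F_p)$ and so has exactly $p+1$ points, and then subtract the points at infinity, which satisfy $x^2 = ty^2$ and hence number $2$ or $0$ according as $t$ is or is not a square; this gives $p+1-\left(1+\legendre{t}{p}\right) = p - \legendre{t}{p}$. Your main argument instead runs an additive-character decomposition and evaluates the resulting quadratic Gauss sums, and all the steps check out: the trivial character gives the main term $p$, the factorization $\sum_{x,y}\psi(x^2-ty^2-u) = \psi(-u)\,g(\psi)\cdot\legendre{-t}{p}g(\psi)$ is right, and the identity $g(\psi)^2 = \legendre{-1}{p}p$ makes the signs cancel to give $\legendre{t}{p}\psi(-u)$ per nontrivial $\psi$, summing to $-\legendre{t}{p}$. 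What each approach buys: the paper's argument is shorter, requires only the fact (already available from its earlier pigeonhole discussion) that a nondegenerate conic over $\F_p$ has a rational point, and meshes with the conic-centered machinery used throughout; yours is more computational but fully self-contained given standard Gauss-sum facts, and it generalizes mechanically to quadrics in more variables where the geometric shortcut is less immediate. One caveat on your ``cleaner alternative'': the classical evaluation $\sum_x \legendre{x^2-u}{p} = -1$ for $u \neq 0$ is itself essentially equivalent to counting points on the hyperbola $(x-y)(x+y)=u$, i.e.\ to the case of the lemma where $t$ is a square, so that route is less a proof than a reduction to an equivalent standard fact; the Gauss-sum version is the one that stands on its own.
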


\begin{proof}
The projective conic has $p+1$ points, since it is isomorphic to
$\Proj^1(\F_p)$, and the number of points at infinity is either $2$ or
$0$, according as whether $t$ is or is not a square modulo $p$.
\end{proof}

\begin{proposition} \label{prop:cyclic}
The group $SO(g_t)$ is cyclic of order $p - \legendre{t}{p}$. Every
element of $SO(g_t)$ is semisimple.
\end{proposition}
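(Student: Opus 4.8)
The plan is to identify $SO(g_t)$ with the norm-one subgroup of the multiplicative group of the quadratic $\F_p$-algebra
\[
A = \F_p[z]/(z^2 - t),
\]
whose norm form $\N(x + yz) = x^2 - ty^2$ is exactly $g_t$. There are two cases: if $t$ is a square modulo $p$ then $A \cong \F_p \times \F_p$, while if $t$ is a nonsquare then $A \cong \F_{p^2}$. In either case the associated bilinear form is $B((x_1,y_1),(x_2,y_2)) = x_1 x_2 - t y_1 y_2$, and I would work throughout in the basis $\{1, z\}$ of $A$ over $\F_p$, under which $\langle 1 \rangle^\perp = \F_p z$ and $\N(z) = -t \neq 0$.

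First I would construct a homomorphism $\mu$ from the norm-one group $G_1 = \ker(\N \colon A^\times \to \F_p^\times)$ into $SO(g_t)$ by sending $w \in G_1$ to the $\F_p$-linear map ``multiplication by $w$''. This map preserves $\N$ because $\N$ is multiplicative, and its determinant equals $\N(w) = 1$, so its image does land in $SO(g_t)$; moreover $\mu$ is clearly an injective homomorphism. The crux is surjectivity. Given $M \in SO(g_t)$, I would set $w = M(1)$ and note $\N(w) = \N(1) = 1$, so $w \in G_1$, and then show that $\mu(w)^{-1} M$ is the identity. This reduces to the claim that any element of $SO(g_t)$ fixing $1$ is trivial: such an element preserves $\langle 1 \rangle^\perp = \F_p z$, hence acts on it by a scalar $\lambda$ satisfying $\lambda^2 \N(z) = \N(z)$, which forces $\lambda = \pm 1$; since its determinant in the basis $\{1, z\}$ equals $\lambda$ and must be $1$, we conclude $\lambda = 1$. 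This surjectivity argument is the main obstacle, in that it is where the geometry of the form genuinely enters; everything afterward is bookkeeping.

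With $\mu$ an isomorphism, the remaining assertions follow quickly. The norm map $\N \colon A^\times \to \F_p^\times$ is surjective in both cases, so $|G_1| = |A^\times|/(p-1)$, which equals $(p-1)^2/(p-1) = p-1$ when $A \cong \F_p \times \F_p$ and $(p^2-1)/(p-1) = p+1$ when $A \cong \F_{p^2}$; in both cases this is $p - \legendre{t}{p}$. For cyclicity, when $t$ is a nonsquare $G_1$ is a subgroup of the cyclic group $\F_{p^2}^\times$ and is therefore cyclic, while when $t$ is a square the identification $A \cong \F_p \times \F_p$ gives $G_1 = \{(u, u^{-1}) : u \in \F_p^\times\} \cong \F_p^\times$, again cyclic. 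Finally, every element of $SO(g_t)$ has order dividing $p - \legendre{t}{p}$, which is coprime to $p$, so it satisfies the separable polynomial $x^m - 1$ for some $m$ prime to $p$ and is thus semisimple over $\overline{\F}_p$.
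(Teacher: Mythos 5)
Your proof is correct, and it reaches the same structural conclusion as the paper --- that $SO(g_t)$ is the norm-one subgroup of the quadratic algebra $\F_p[z]/(z^2-t)$, hence isomorphic to $\F_p^\times$ when $t$ is a square and to the kernel of the norm $\F_{p^2}^\times \to \F_p^\times$ when it is not --- but you get there by a genuinely different route. The paper works outward-in: it computes by hand that every element of $SO(g_t)$ has the explicit matrix form $M_{\alpha,\beta} = \left(\begin{smallmatrix} \alpha & \beta t \\ \beta & \alpha \end{smallmatrix}\right)$ with $\alpha^2 - t\beta^2 = 1$ (so surjectivity of the identification is automatic), counts the group using its earlier Lemma~\ref{lemma:affconic} on points of the affine conic $x^2 - ty^2 = 1$, and only then maps to $\F_p^\times$ or $\F_{p^2}^\times$ via the eigenvalue map $M_{\alpha,\beta} \mapsto \alpha + \beta\sqrt{t}$ (the inverse of your $\mu$). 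You work inward-out: you inject the norm-one group $G_1$ into $SO(g_t)$ by multiplication operators, and the real content of your argument is the stabilizer step --- an isometry fixing $1$ acts on $\langle 1\rangle^\perp = \F_p z$ by $\pm 1$ and the determinant condition kills the sign --- which replaces the paper's explicit column computation. You also compute the order differently, via surjectivity of the norm map $A^\times \to \F_p^\times$ (standard, but worth a word of justification in the inert case, e.g.\ $g \mapsto g^{p+1}$ sends a generator to an element of order $p-1$) rather than via the conic point count; your proposition is thus independent of Lemma~\ref{lemma:affconic}. For semisimplicity you use what the paper offers as its alternative argument (group order coprime to $p$, so elements satisfy a separable equation $x^m - 1$), which is fine. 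What each approach buys: yours is coordinate-free, treats the split and inert cases uniformly through the algebra $A$, and generalizes readily; the paper's explicit matrices $M_{\alpha,\beta}$, their group law, and their characteristic polynomial $x^2 - 2\alpha x + 1$ are reused verbatim in the proofs of Proposition~\ref{prop:quadchar} and the simply-transitivity lemma, so if one adopted your proof one would want to record, at least in passing, that $\mu(x+yz)$ has exactly that matrix in the basis $\{1,z\}$.
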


\begin{proof}
The group $O(g_t)$ consists of $2 \times 2$ matrices whose
columns are orthonormal with respect to $g_t$.  If the first
column is $(\alpha,\beta)$, with $\alpha^2-t\beta^2=1$, then
the second must be proportional to $(\beta t,\alpha)$ and the
constant of proportionality must be $\pm 1$.  For the matrix to
be in $SO(g_t)$, that constant must be $1$.  Thus, the elements
of $SO(g_t)$ are the matrices of the form
\[
M_{\alpha,\beta} =
\begin{pmatrix}
\alpha & \beta t \\
\beta & \alpha
\end{pmatrix}
\]
with $\alpha^2 - t \beta^2 = 1$. By Lemma~\ref{lemma:affconic},
the size of the group is as claimed.

Multiplication of matrices in $SO(g_t)$ leads to the group law
\[
M_{\alpha,\beta}M_{\gamma,\delta} = M_{\alpha\gamma + \beta\delta t, \alpha\delta + \beta\gamma}.
\]

If $t = r^2$ is a square, then $\alpha^2 - \beta^2 t = \alpha^2
- \beta^2 r^2 = (\alpha-\beta r)(\alpha + \beta r)$. It is easy
to see that the map $\eta\colon SO(g_t) \to \F_p^\times$ given by
$\eta(M_{\alpha,\beta}) = \alpha + \beta r$ is an isomorphism
of groups. Since $\F_p^\times$ is cyclic of order $p-1$, so is
$SO(g_t)$.

If $t$ is not a square, consider the map $\eta\colon SO(g_t) \to
\F_{p^2}^\times$ given by $\eta(M_{\alpha,\beta}) = \alpha +
\beta\sqrt{t}$. It gives an isomorphism of $SO(g_t)$ with the
subgroup $N$ of elements of $\F_{p^2}^\times$ of norm $1$.
These are the solutions of $z^{p+1} = 1$ in $\F_{p^2}^\times$,
so they are $p+1$ in number. Since $N$ is a finite subgroup of
the multiplicative group of a field, it is also cyclic.

Note that in either case, we mapped $M_{\alpha,\beta}$ to one
of its eigenvalues (either $\alpha + \beta r$ or $\alpha +
\beta \sqrt{t}$). In fact, in the first case $r$ is a square
root of $t$, so both maps are essentially the same.

To show that every element of $SO(g_t)$ is semisimple, observe
that the characteristic polynomial of $M_{\alpha,\beta}$ is
\[
x^2 - 2\alpha x + 1,
\]
which has discriminant $4(\alpha^2 -1)$. If $\alpha \neq \pm
1$, then the eigenvalues in $\F_{p^2}$ are distinct, so
$M_{\alpha,\beta}$ is semisimple. If $\alpha = \pm 1$, then
$\alpha^2 - t \beta^2 = 1$ forces $\beta = 0$, and therefore
$M_{\alpha,\beta}$ is either the identity or its negation.
Alternatively, one may use the fact that $SO(g_t)$ is a group of order
$p \pm 1$, which is coprime to the characteristic, to deduce
semisimplicity of its elements.
\end{proof}

\begin{proposition} \label{prop:quadchar}
Let $\psi \in SO(g_t)$ have characteristic polynomial $x^2 + (2-v) x +
1 = (x+1)^2 - vx$, with $v \neq 0$. Then $v$ is a square in $\F_p$ iff
\[
\frac{| SO(g_t) | }{\ord(\psi)}
\]
is even.
\end{proposition}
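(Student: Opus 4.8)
The plan is to reduce the statement to two independent pieces: a purely group-theoretic criterion for an element of a finite cyclic group to be a square, and the single identity $\legendre{v}{p} = \lambda^{-n/2}$. Here I write $n = |SO(g_t)| = p - \legendre{t}{p}$ and $\lambda = \eta(\psi)$ for the eigenvalue of $\psi$ produced by the isomorphism $\eta$ of Proposition~\ref{prop:cyclic}; note $n$ is even because $p$ is odd, and $\ord(\psi) = \ord(\lambda)$ since $\eta$ is an isomorphism. Comparing the given characteristic polynomial $x^2 + (2-v)x + 1$ with $x^2 - 2\alpha x + 1$ shows that the two eigenvalues of $\psi$ are $\lambda$ and $\lambda^{-1}$ (their product is the constant term $1$) with $\lambda + \lambda^{-1} = v - 2$, so that $v = (\lambda+1)^2/\lambda$. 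The hypothesis $v \neq 0$ guarantees $\lambda \neq -1$, so $\lambda + 1$ is invertible.

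First I would record the group lemma: in a cyclic group of even order $n$ with generator $g$, the element $g^k$ is a square iff $k$ is even, and since $n$ is even this happens iff $\gcd(k,n)$ is even, i.e.\ iff $n/\ord(g^k) = \gcd(k,n)$ is even. Applying this to $SO(g_t)$ and $\psi$ reduces the proposition to showing that $v$ is a square in $\F_p$ iff $\psi$ is a square in $SO(g_t)$, equivalently (transporting through the isomorphism $\eta$) iff $\lambda$ is a square in the cyclic image of $\eta$, which is $\F_p^\times$ or the norm-one subgroup $N$ of $\F_{p^2}^\times$ according to whether $t$ is or is not a square.

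The heart of the argument is the computation of the quadratic character of $v$. Since $v \in \F_p^\times$, I would evaluate
\[
\legendre{v}{p} = v^{(p-1)/2} = \frac{(\lambda+1)^{p-1}}{\lambda^{(p-1)/2}},
\]
using $v = (\lambda+1)^2/\lambda$. When $t$ is a square, $\lambda \in \F_p^\times$, so $(\lambda+1)^{p-1} = 1$ and the right-hand side is $\lambda^{-(p-1)/2} = \lambda^{-n/2}$. When $t$ is not a square, $\lambda \in \F_{p^2}$ has norm one, so its Frobenius conjugate $\lambda^p$ equals $\lambda^{-1}$; hence $(\lambda+1)^p = \lambda^p + 1 = (\lambda+1)\lambda^{-1}$, giving $(\lambda+1)^{p-1} = \lambda^{-1}$ and the right-hand side $\lambda^{-(p+1)/2} = \lambda^{-n/2}$ again. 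In both cases $\legendre{v}{p} = \lambda^{-n/2}$, and since $\lambda$ lies in a cyclic group of order $n$ this quantity is $\pm 1$ and equals $1$ precisely when $\lambda^{n/2} = 1$, i.e.\ exactly when $\lambda$ is a square. Combining this with the lemma completes the proof.

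The main obstacle is the Frobenius computation in the inert case: one must recognize that the second eigenvalue $\lambda^{-1}$ coincides with the Galois conjugate $\lambda^p$, which is what makes $(\lambda+1)^{p-1}$ collapse to $\lambda^{-1}$ and merges the two cases into the uniform formula $\legendre{v}{p} = \lambda^{-n/2}$. A secondary point to verify is that, although the formula is derived by arithmetic in $\F_{p^2}$, the value $\lambda^{-n/2}$ genuinely lands in $\{\pm 1\} \subset \F_p$, consistent with $v^{(p-1)/2}$ being a Legendre symbol; this is automatic because $\lambda^n = 1$.
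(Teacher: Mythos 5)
Your proof is correct, and its core differs genuinely from the paper's. Both arguments share the same skeleton: via the isomorphism $\eta$ of Proposition~\ref{prop:cyclic}, the quotient $|SO(g_t)|/\ord(\psi)$ is even iff the eigenvalue $\lambda$ is a square in the cyclic group $\F_p^\times$ or $N$ (your explicit cyclic-group lemma just spells out what the paper asserts in one sentence). The difference is in how the equivalence ``$v$ is a square in $\F_p$ iff $\lambda$ is a square in its group'' is established. The paper argues multiplicatively with square roots, in two cases: in the split case it reads the equivalence off from $v = (\lambda+1)^2/\lambda$, and in the inert case it chases an explicit square root $\mu$ of $\lambda$, using $\mu + \mu^{-1} = \tr(\mu)$ in one direction and, in the converse direction, showing that a root of $\mu^2 \mp w\mu + 1 = 0$ must lie in the norm-one subgroup $N$. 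You instead compute the Legendre symbol outright by Euler's criterion, $\legendre{v}{p} = (\lambda+1)^{p-1}\lambda^{-(p-1)/2}$, and use Frobenius ($\lambda^p = \lambda^{-1}$ in the inert case, so $(\lambda+1)^{p-1} = \lambda^{-1}$) to collapse both cases into the single identity $\legendre{v}{p} = \lambda^{-n/2}$, after which ``square in a cyclic group of even order $n$ iff the $n/2$ power is trivial'' finishes the proof. Your route buys uniformity: the split and inert cases merge into one formula, and the paper's trickiest step (verifying $\mu \in N$ in the converse direction of the inert case) disappears entirely. The paper's route buys a near-trivial split case and avoids any appeal to Euler's criterion, keeping the argument purely inside the group structure. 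Both are complete and of comparable length; yours is arguably the more mechanical and generalizable computation.
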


\begin{proof}
First, note that since $\psi$ is semisimple, its order is equal to the
order of either of its two eigenvalues, which are reciprocals of each
other. Therefore $| SO(g_t) |/\ord(\psi)$ is even iff an eigenvalue
$\lambda$ is the square of an element in the corresponding cyclic group
$\F_p^\times$ or $N$, with the notation from the proof of the previous
proposition.

First, suppose that $t = r^2$ is a square, so $SO(g_t) \cong
\F_p^\times$. Because the characteristic polynomial is $(x+1)^2-vx$, we
have
\[
v = \frac{ (\lambda + 1)^2 }{\lambda},
\]
which implies $v$ is a square in $\F_p^{\times}$ iff $\lambda$ is as well.

Next, suppose $t$ is not a square, so $SO(g_t) \cong N$. If $\lambda =
\mu^2$ with $\mu \in N$, then
\[
v = \lambda + \frac{1}{\lambda} + 2 = \mu^2 + \frac{1}{\mu^2} + 2 = \left(\mu + \frac{1}{\mu} \right)^2
\]
and $\mu + \mu^{-1} = \mu + \mu^p = \tr(\mu) \in \F_p$. Therefore $v$
is a square in $\F_p^\times$.  Conversely, if $v = w^2$ is a square
then let $\mu$ be a square root of $\lambda$ in $\overline{\F}_p$. We
need to show that $\mu \in N$. The above calculation shows that
\[
\mu + \frac{1}{\mu} = \pm w
\]
and therefore $\mu$ satisfies a quadratic equation
\[
\mu^2  \mp w \mu + 1 = 0.
\]
Thus $\mu \in \F_{p^2}$ and $\mu$ has norm $1$.  It follows that $\mu
\in N$, so $\lambda$ is a square in $N$.
\end{proof}

\begin{lemma}
Let $u \neq 0$, and let $D_{t,u}$ be the affine conic given by $x^2 - t
y^2 = u$. Then $SO(g_t)$ acts simply transitively on $D_{t,u}$.
\end{lemma}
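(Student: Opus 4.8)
The plan is to prove two things: that $SO(g_t)$ acts \emph{on} $D_{t,u}$ (i.e.\ the action is well defined), and that this action is simply transitive. The first point is immediate from the definition of the orthogonal group: if $M \in SO(g_t)$ and $(x,y) \in D_{t,u}$, then $g_t(M(x,y)) = g_t(x,y) = u$, so $M$ preserves $D_{t,u}$. For simple transitivity I would combine a counting argument with a computation of stabilizers. By Lemma~\ref{lemma:affconic} the conic $D_{t,u}$ has exactly $p - \legendre{t}{p}$ points, and by Proposition~\ref{prop:cyclic} the group $SO(g_t)$ has exactly $p - \legendre{t}{p}$ elements. Since the two cardinalities agree, it suffices to prove that the action is transitive \emph{or} that it is free (either one, together with the equality of sizes, forces simple transitivity by the orbit--stabilizer theorem).

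The cleanest route is to prove freeness directly, since that is a purely local statement about fixed points. Suppose $M_{\alpha,\beta} \in SO(g_t)$ fixes a point $(x_0,y_0) \in D_{t,u}$; I want to conclude $M_{\alpha,\beta}$ is the identity, i.e.\ $\alpha = 1$ and $\beta = 0$. Writing out $M_{\alpha,\beta}(x_0,y_0) = (x_0,y_0)$ using the explicit matrix from Proposition~\ref{prop:cyclic} gives the two scalar equations
\[
(\alpha - 1) x_0 + \beta t \, y_0 = 0, \qquad \beta x_0 + (\alpha - 1) y_0 = 0.
\]
This is a homogeneous linear system in $(x_0,y_0)$ with coefficient matrix $M_{\alpha,\beta} - I$. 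Since $(x_0,y_0) \neq (0,0)$ — because $g_t(x_0,y_0) = u \neq 0$ — the system has a nontrivial solution, so $\det(M_{\alpha,\beta} - I) = 0$, meaning $1$ is an eigenvalue of $M_{\alpha,\beta}$. Every element of $SO(g_t)$ is semisimple by Proposition~\ref{prop:cyclic}, and the two eigenvalues are reciprocal; the characteristic polynomial is $x^2 - 2\alpha x + 1$, so if $1$ is a root then the other root is also $1$ and $2\alpha = 2$, forcing $\alpha = 1$. Then the constraint $\alpha^2 - t\beta^2 = 1$ gives $t\beta^2 = 0$, hence $\beta = 0$ since $t \neq 0$, and $M_{\alpha,\beta} = I$ as desired.

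Having established that the action is free, freeness plus $|SO(g_t)| = |D_{t,u}|$ yields that each orbit has size $|SO(g_t)| = |D_{t,u}|$, so there is a single orbit and the action is simply transitive. The main obstacle to watch for is the edge case where $D_{t,u}$ might a priori be empty, which would make ``simply transitive'' vacuous or ill-posed; but Lemma~\ref{lemma:affconic} guarantees $|D_{t,u}| = p - \legendre{t}{p} \geq p - 1 \geq 2$ for odd $p$, so the conic is always nonempty and the orbit-counting conclusion is genuine. No delicate case analysis on whether $t$ is a square is needed, because both the group order and the point count are given by the \emph{same} formula $p - \legendre{t}{p}$ in all cases, which is exactly what makes the argument uniform.
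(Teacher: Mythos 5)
Your proof is correct and follows essentially the same route as the paper's: both show freeness by observing that a fixed nonzero vector forces $1$ to be a root of the characteristic polynomial $x^2 - 2\alpha x + 1$, hence $\alpha = 1$ and then $\beta = 0$, and both conclude simple transitivity from freeness together with the cardinality match $|SO(g_t)| = |D_{t,u}|$. Your additional checks (nonemptiness of $D_{t,u}$, the explicit linear system, and well-definedness of the action) are fine but not strictly needed beyond what the paper records.
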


\begin{proof}
The characteristic polynomial of $M_{\alpha,\beta}$ in the
variable $x$ is $x^2 - 2\alpha x + 1$, so $1$ is an eigenvalue
if and only if $\alpha=1$, in which case $\alpha^2-t\beta^2=1$
implies $\beta=0$ and hence $M_{\alpha,\beta}$ is the identity
matrix. Thus, no element of $SO(g_t)$ except the identity
matrix can fix any nonzero vector.  Because $SO(g_t)$ preserves
$D_{t,u}$ and $|SO(g_t)| = |D_{t,u}|$, it must act simply
transitively.
\end{proof}

\begin{corollary} \label{cor:sign}
Let $\psi \in SO(g_t)$ have characteristic polynomial $x^2 + (2-v) x +
1 = (x+1)^2 - vx$, with $v \neq 0$. Then the sign of the permutation
induced by $\psi$ on $D_{t,u}$ is $\legendre{v}{p}$.
\end{corollary}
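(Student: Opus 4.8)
===PROOF PROPOSAL===

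The plan is to reduce the sign of the permutation induced by $\psi$ on $D_{t,u}$ to a parity count, and then to identify that parity with the quadratic character $\legendre{v}{p}$ using Proposition~\ref{prop:quadchar}. The key structural fact, established in the preceding lemma, is that $SO(g_t)$ acts \emph{simply transitively} on $D_{t,u}$. This means that once we fix a base point $P_0 \in D_{t,u}$, the action of $\psi$ on $D_{t,u}$ is conjugate (via the orbit map $M \mapsto M \cdot P_0$) to the action of $\psi$ on the group $SO(g_t)$ by left multiplication. So the sign of $\psi$ acting on $D_{t,u}$ equals the sign of left multiplication by $\psi$ on the underlying set of the cyclic group $SO(g_t)$.

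First I would record the general fact about the sign of left multiplication by an element $g$ of order $m = \ord(\psi)$ in a finite group $G$ of order $n = |SO(g_t)|$. Left multiplication by $g$ decomposes the $n$ elements of $G$ into $n/m$ cycles, each of length $m$ (the cosets of $\langle g \rangle$, each permuted cyclically). The sign of a single $m$-cycle is $(-1)^{m-1}$, so the total sign is $\bigl((-1)^{m-1}\bigr)^{n/m} = (-1)^{(m-1)n/m}$. Since $SO(g_t)$ is cyclic of order $n = p - \legendre{t}{p}$, which is $p \pm 1$ and hence even (as $p$ is odd), $n/m$ is an integer and $n$ is even. The exponent $(m-1)n/m = n - n/m$ has the same parity as $n/m$ because $n$ is even; therefore the sign is $(-1)^{n/m} = (-1)^{|SO(g_t)|/\ord(\psi)}$.

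Next I would invoke Proposition~\ref{prop:quadchar}, which states precisely that $v$ is a square in $\F_p$ (equivalently $\legendre{v}{p} = 1$) if and only if $|SO(g_t)|/\ord(\psi)$ is even. Combining this with the parity computation above, the sign $(-1)^{|SO(g_t)|/\ord(\psi)}$ equals $+1$ exactly when $\legendre{v}{p} = 1$ and $-1$ exactly when $\legendre{v}{p} = -1$, which is the desired equality of sign with $\legendre{v}{p}$.

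The main obstacle, and the step requiring the most care, is the first reduction: justifying that the sign of the permutation on $D_{t,u}$ equals the sign of left-translation on $SO(g_t)$, and then correctly computing the parity of the exponent. The conjugacy-of-actions argument is clean because simple transitivity makes the orbit map a bijection intertwining the two actions, and sign is a conjugation invariant. The parity bookkeeping is where an error could creep in: one must use that $n$ is even to conclude $(-1)^{n - n/m} = (-1)^{n/m}$, since otherwise the two exponents could differ in parity. Once these two points are handled, Proposition~\ref{prop:quadchar} does the remaining work and the result follows immediately.
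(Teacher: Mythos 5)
Your proof is correct and takes essentially the same approach as the paper: both use simple transitivity to identify the action of $\psi$ on $D_{t,u}$ with a product of $\ell = |SO(g_t)|/\ord(\psi)$ disjoint cycles of length $\ord(\psi)$, compute the sign as $(-1)^\ell$ using the evenness of $|SO(g_t)| = p \pm 1$, and finish by invoking Proposition~\ref{prop:quadchar}. The only cosmetic difference is that you derive the cycle decomposition from the right-coset structure of $\langle \psi \rangle$ under left translation (which, as you note, does not even require cyclicity), whereas the paper derives it by viewing $\psi$ as a power of a generator whose action is a single $(p\pm 1)$-cycle.
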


\begin{proof}
Let $k = \ord(\psi)$ and $\ell = |SO(g_t)|/k$. Since $SO(g_t)$ is
cyclic and its action on $D_{t,u}$ is simply transitive, the
permutation induced on $D_{t,u}$ by any generator of the group is a
$|SO(g_t)|$-cycle. Therefore, the permutation induced by $\psi$ on
$D_{t,u}$ is a product of $\ell$ disjoint $k$-cycles, where $k \ell = p
\pm 1$ is even. Thus, the sign of the permutation is $(-1)^{(k+1) \ell}
= (-1)^{k \ell + \ell} = (-1)^\ell$. By
Proposition~\ref{prop:quadchar}, this is equal to the quadratic
character of $v$ modulo $p$.
\end{proof}

\section{Cycle structure and sign of permutation}

In this section, we analyze the cycle structure of the permutation
$\tau_Q$ and show that its sign is $\legendre{q}{p}$. Once again the
analysis splits up into various cases.

\medskip
\noindent \emph{Case} 1A: $b = c = d = 0$.
Here $\tau_Q$ is the identity permutation, so it has $p+1$ fixed
points. The sign of the permutation is $1$, which equals $\legendre{q}{p}$
since $q$ is a square mod~$p$, because $\bar{q} = a^2$.

\medskip
\noindent \emph{Case} 1B: $\bar{q} = a^2$, but $(b,c,d) \neq (0,0,0)$.
In this case, $\tau_Q$ has exactly one fixed point on the conic $C_p$,
so there remain $p$ points which it must permute. Note that $N = \phi_Q
- I$ has eigenvalue zero with multiplicity three, so it is nilpotent:
in fact, by the Cayley-Hamilton theorem, $N^3 = 0$, and since $p \geq
3$, we have $N^p = 0$. Then $\phi_Q^p = (I + N)^p = I + N^p = I$, since
we are in characteristic $p$. Therefore the orbit sizes must all divide
$p$. This forces the remaining $p$ points to be in one orbit, since $p$
is prime. The permutation is thus a $p$-cycle, with sign $(-1)^{p+1} =
1$, which is again the same as $\legendre{q}{p}$.

For the remaining cases, we make the following observations. Let $v_0 =
(b,c,d)$ be an eigenvector with eigenvalue $1$ and nonzero norm. If $v
= (x,y,z)$ is a vector on the cone $K_p$ and orthogonal to $v_0$, then
\begin{align*}
bx + cy + dz &= 0, \\
x^2 + y^2 + z^2 &= 0.
\end{align*}
As we saw in the analysis of Case~2 in Section~\ref{sec:fixed}, this
system has zero or two solutions up to scaling. In the latter case,
both are eigenvectors for $\phi_Q$ with eigenvalue $-1$.  These fixed
points contribute nothing to the sign of the permutation, so it
suffices to look at the orbits of vectors that are not orthogonal to
$v_0$.

Now if $v$ is a vector on the cone $K_p$ not orthogonal to $v_0$
(there are $p\pm 1$ of these up to scaling), then we may fix a scaling
for $v$, by replacing it by ${v}/{\langle v, v_0 \rangle}$. Thus we
may assume that all these $v$ satisfy $\langle v, v_0 \rangle =
1$. The linear transformation $\phi_Q$ permutes these vectors, for if
$\langle v_0, v \rangle = 1$, then
\[
\langle \phi_Q(v), v_0 \rangle  = \langle \phi_Q(v), \phi_Q(v_0) \rangle = \langle v, v_0 \rangle = 1.
\]
For any such $v$, its projection along $v_0$ is the vector $w_0 =
v_0/\langle v_0, v_0 \rangle$, and its projection orthogonal to $v_0$
is $v_\perp = v - w_0$. Note that
\[
\langle v_\perp, v_\perp \rangle = \langle v, v \rangle  - \langle w_0 , w_0 \rangle = 0 - \langle w_0, w_0 \rangle = -\frac{1}{\langle v_0, v_0 \rangle}.
\]

Conversely, given any vector $w$ of norm $-1/\langle v_0, v_0 \rangle$
orthogonal to $v$, there is a unique vector $v$ on the cone $K_p$ with
$\langle v, v_0 \rangle = 1$ and $v^\perp = w$, namely $v = w + w_0$.

Because $v_0$ is fixed, $\phi_Q$ induces an orthogonal transformation
on the orthogonal complement of $v_0$. The quadratic form on this
subspace has discriminant $1/\langle v_0, v_0 \rangle$ and must
therefore be equivalent to the diagonal form $f_Q(x,y) = x^2 - ty^2$,
where $t = -\langle v_0, v_0 \rangle^{-1} = -(b^2 + c^2 + d^2)^{-1} =
(a^2 - \bar{q})^{-1} $.

Therefore, $\phi_Q$ gives rise to an element $\psi_Q \in SO_2(f_Q)$,
whose characteristic polynomial is
\[
x^2  + 2\left(1 - \frac{2a^2}{\bar{q}}\right) x + 1
\]
by Lemma~\ref{lemma:charpoly}. To understand the orbits of $\phi_Q$ on
the projectivization of the cone $K_p$, it is enough to understand the
action of $\psi_Q$ on the vectors of norm $u = -1/\langle v_0, v_0
\rangle$ in $\F_p^2$.

\medskip
\noindent \emph{Case} 2: $a = 0$. In this case, the
map $\psi_Q$ is simply multiplication by $-1$, so the permutation
breaks up into $\frac{1}{2} \left(p - \legendre{-q}{p}\right)$ disjoint
transpositions. Because
\begin{align*}
\frac{1}{2} \left(p - \legendre{-q}{p}\right) &= \frac{1}{2}\left(p -
  \legendre{-1}{p}\right) + \frac{1}{2}\legendre{-1}{p} \left( 1 -
  \legendre{q}{p} \right) \\
& \equiv \frac{1}{2}\left( 1 - \legendre{q}{p} \right) \pmod 2,
\end{align*}
we see that the sign of the permutation is equal to $\legendre{q}{p}$.

\medskip
\noindent \emph{Case} 3: $a \neq 0$ and $\bar{q} \neq a^2$.
We apply Corollary~\ref{cor:sign}, with $t = a^2 - \bar{q}$ and $v =
4a^2/\bar{q}$, to see that the sign of the permutation is the quadratic
character of $4a^2/\bar{q}$ modulo $p$, i.e., $\legendre{q}{p}$.

We have thus proved the following theorem.

\begin{theorem}
Let $p$ and $q$ be distinct primes, with $p$ odd. The sign of the permutation
induced on the $p + 1$ Hurwitz primes of norm $p$ (up to left
multiplication by units) through metacommutation by a prime of norm $q$
is $\legendre{q}{p}$.
\end{theorem}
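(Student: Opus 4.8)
The plan is to exploit the dictionary built up in Sections 2 and 3: the primes of norm $p$ (up to left units) correspond bijectively to the points of the conic $C_p$, and metacommutation by $Q$ corresponds to the action of the conjugation map $\phi_Q \in SO_3(\F_p)$ on $C_p$. So computing the sign of $\tau_Q$ reduces to computing the sign of the permutation that $\phi_Q$ induces on the $p+1$ points of $C_p$. Throughout I would keep track of the eigenvector $v_0 = (b,c,d)$ of eigenvalue $1$ and of the characteristic polynomial supplied by Lemma~\ref{lemma:charpoly}, and organize the argument by whether $\bar q = a^2$ (equivalently, whether $v_0$ lies on the cone).

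First I would dispose of the degenerate cases $\bar q = a^2$. If $b = c = d = 0$, then $\phi_Q$ is the identity, so the sign is $1$, matching $\legendre{q}{p}$ since $q \equiv a^2$ is a square. If $(b,c,d) \neq (0,0,0)$ but still $\bar q = a^2$, then $v_0$ lies on the cone and the characteristic polynomial is $(x-1)^3$, so $N = \phi_Q - I$ is nilpotent. The characteristic-$p$ identity $\phi_Q^p = (I+N)^p = I + N^p = I$ forces every orbit length to divide $p$; since $v_0$ is the unique fixed point on $C_p$, the remaining $p$ points form a single $p$-cycle, whose sign is $(-1)^{p-1} = 1 = \legendre{q}{p}$.

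The heart of the argument is the case $\bar q \neq a^2$, where $v_0$ has nonzero norm $b^2 + c^2 + d^2 = \bar q - a^2$. Here I would first show that the fixed points of $\tau_Q$ on $C_p$ are exactly the cone points orthogonal to $v_0$: an eigenvector $w$ for an eigenvalue $\lambda \neq 1$ satisfies $\langle v_0, w\rangle = \langle \phi_Q v_0, \phi_Q w\rangle = \lambda \langle v_0, w\rangle$, hence $\langle v_0, w\rangle = 0$, and such an eigenvector lies on the cone by Lemma~\ref{easylem}. These fixed points are inert, so they do not affect the sign, and I may restrict attention to the $p \pm 1$ cone points \emph{not} orthogonal to $v_0$. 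Normalizing each such point by $\langle v, v_0\rangle = 1$ and projecting onto the orthogonal complement of $v_0$ identifies these points with the norm-$u$ vectors in a binary quadratic space, where $u = -1/\langle v_0, v_0\rangle$ and the induced form is $f_Q = g_t$ with $t = (a^2 - \bar q)^{-1}$. Under this identification $\phi_Q$ becomes an element $\psi_Q \in SO_2(f_Q)$ whose characteristic polynomial is $x^2 + 2\left(1 - \tfrac{2a^2}{\bar q}\right)x + 1$ by Lemma~\ref{lemma:charpoly}.

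It then remains to compute the sign of $\psi_Q$ on these norm-$u$ vectors, which is where Section 5 pays off. When $a \neq 0$ I would apply Corollary~\ref{cor:sign} with $v = 4a^2/\bar q \neq 0$, giving sign $\legendre{4a^2/\bar q}{p} = \legendre{q}{p}$. When $a = 0$ the corollary does not apply (there $v = 0$), but then $\psi_Q$ is simply multiplication by $-1$; the permutation is a product of $\tfrac{1}{2}\bigl(p - \legendre{-q}{p}\bigr)$ transpositions, and a short congruence computation modulo $2$ shows its sign is again $\legendre{q}{p}$. The main obstacle is the geometric reduction in the third paragraph: showing that the three-dimensional conic action of $\phi_Q$ is faithfully modeled, orbit for orbit, by the two-dimensional affine action of $\psi_Q$ on $D_{t,u}$, and verifying that the excluded fixed points truly contribute nothing. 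Once that reduction is in place, the sign computation follows essentially for free from Proposition~\ref{prop:quadchar} and Corollary~\ref{cor:sign}.
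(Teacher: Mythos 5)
Your proposal is correct and takes essentially the same route as the paper: the identical case split (identity when $b=c=d=0$; the nilpotent $(I+N)^p = I$ argument giving a $p$-cycle when $\bar{q}=a^2$ but $(b,c,d)\neq(0,0,0)$; and for $\bar{q}\neq a^2$ the reduction, via normalization $\langle v, v_0\rangle = 1$ and projection onto $v_0^{\perp}$, to an element $\psi_Q \in SO_2(f_Q)$ acting on norm-$u$ vectors, finished by Corollary~\ref{cor:sign} when $a \neq 0$ and by the multiplication-by-$-1$ transposition count when $a = 0$). The ``main obstacle'' you flag at the end is resolved exactly as you outline --- the map $v \mapsto v - v_0/\langle v_0, v_0\rangle$ is a $\phi_Q$-equivariant bijection between the non-orthogonal cone points and $D_{t,u}$, and the cone points orthogonal to $v_0$ are honest eigenvectors of $\phi_Q$ (with eigenvalue $-1$ when $a=0$, and with the two reciprocal eigenvalues of $\psi_Q$ when $a\neq 0$, since an element of $SO_2$ fixes rather than swaps the isotropic lines), so they are fixed points and contribute nothing to the sign; note that Lemma~\ref{easylem} alone does not cover the $\lambda = -1$ case in your parenthetical justification, so this last point is the one place where your write-up would need the small supplementary argument.
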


This completes the proof of Theorem~\ref{theorem:main}.  Note
that the cycle structure of the permutation is somewhat more
subtle: it is determined by $a$, $\bar{q}$, and the order of
the roots of $x^2 + 2(1-2a^2/\bar{q})x+1$ in the multiplicative
group $\F_{p^2}^\times$.

\section*{Acknowledgements}

The authors thank Noam Elkies, Benedict Gross, Bjorn Poonen, and the anonymous referee
for helpful comments.

\end{document}